\newtheorem{definition-lemma}[theorem]{Definition/Lemma}
\newtheorem{definition-explanation}[theorem]{Definition/Explanation}
\newtheorem{explanation-definition}[theorem]{Explanation/Definition}
\newtheorem{definition-fact}[theorem]{Definition/Fact}
\newtheorem{definition-notation}[theorem]{Definition/Notation}
\newtheorem{definition-conjecture}[theorem]{Definition/Conjecture}
\newtheorem{lemma-definition}[theorem]{Lemma/Definition}
\newtheorem{remark-notation}[theorem]{\it Remark/Notation}
\newtheorem{application-lemma}[theorem]{Application/Lemma}
\newtheorem{example-definition}[theorem]{Example/Definition}
\newtheorem{definition-prototype}[theorem]{Definition-Prototype}
\numberwithin{equation}{subsection}
\newtheorem{stheorem}{Theorem}[section]
\newtheorem{sdefinition}[stheorem]{Definition}
\newtheorem{sdefinition-lemma}[stheorem]{Definition/Lemma}
\newtheorem{sdefinition-explanation}[stheorem]{Definition/Explanation}
\newtheorem{sexplanation-definition}[stheorem]{Explanation/Definition}
\newtheorem{sdefinition-fact}[stheorem]{Definition/Fact}
\newtheorem{sdefinition-notation}[stheorem]{Definition/Notation}
\newtheorem{sdefinition-conjecture}[stheorem]{Definition/Conjecture}
\newtheorem{slemma}[stheorem]{Lemma}
\newtheorem{slemma-definition}[stheorem]{Lemma/Definition}
\newtheorem{sremark}[stheorem]{\it Remark}
\newtheorem{sremark-notation}[stheorem]{\it Remark/Notation}
\newtheorem{sapplication-lemma}[stheorem]{Application/Lemma}
\newtheorem{sexample-definition}[stheorem]{Example/Definition}
\newtheorem{snotation}[stheorem]{Notation}
\newtheorem{sdefinition-prototype}[stheorem]{Definition-Prototype}
\newtheorem{ssdefinition-lemma}[sstheorem]{Definition/Lemma}
\newtheorem{ssdefinition-explanation}[sstheorem]{Definition/Explanation}
\newtheorem{ssexplanation-definition}[sstheorem]{Explanation/Definition}
\newtheorem{ssdefinition-fact}[sstheorem]{Definition/Fact}
\newtheorem{ssdefinition-notation}[sstheorem]{Definition/Notation}
\newtheorem{ssdefinition-conjecture}[sstheorem]{Definition/Conjecture}
\newtheorem{sslemma-definition}[sstheorem]{Lemma/Definition}
\newtheorem{ssremark-notation}[sstheorem]{\it Remark/Notation}
\newtheorem{ssapplication-lemma}[sstheorem]{Application/Lemma}
\newtheorem{ssexample-definition}[sstheorem]{Example/Definition}
\newtheorem{ssdefinition-prototype}[sstheorem]{Definition-Prototype}
\definecolor{gray02}{cmyk}{0, 0, 0, .02}
\definecolor{gray05}{cmyk}{0, 0, 0, .05}
\definecolor{gray07}{cmyk}{0, 0, 0, .07}
\definecolor{gray10}{cmyk}{0, 0, 0, .1}
\definecolor{gray15}{cmyk}{0, 0, 0, .15}
\definecolor{gray20}{cmyk}{0, 0, 0, .2}
\definecolor{gray25}{cmyk}{0, 0, 0, .25}
\definecolor{gray30}{cmyk}{0, 0, 0, .3}
\definecolor{gray35}{cmyk}{0, 0, 0, .35}
\definecolor{gray40}{cmyk}{0, 0, 0, .4}
\definecolor{gray45}{cmyk}{0, 0, 0, .45}
\definecolor{gray50}{cmyk}{0, 0, 0, .5}
\definecolor{gray55}{cmyk}{0, 0, 0, .55}
\definecolor{gray60}{cmyk}{0, 0, 0, .6}
\definecolor{gray65}{cmyk}{0, 0, 0, .65}
\definecolor{gray70}{cmyk}{0, 0, 0, .7}
\definecolor{gray75}{cmyk}{0, 0, 0, .75}
\definecolor{gray80}{cmyk}{0, 0, 0, .8}
\definecolor{gray85}{cmyk}{0, 0, 0, .85}
\definecolor{gray90}{cmyk}{0, 0, 0, .9}
\definecolor{gray95}{cmyk}{0, 0, 0, .95}
\newcommand{\CP}{{\Bbb C}{\rm P}}
\newcommand{\Fl}{{\mbox{\it Fl}\,}}
\newcommand{\Gr}{\mbox{\it Gr}\,}
\newcommand{\Id}{\mbox{\it Id}\,}
\newcommand{\Span}{\mbox{\it Span}\,}
\newcommand{\SU}{\mbox{\it SU}}
\newcommand{\determinant}{\mbox{\it det}\,}
\newcommand{\boldx}{\mbox{\boldmath $x$}}
\newcommand{\boldy}{\mbox{\boldmath $y$}}
\newcommand{\boldz}{\mbox{\boldmath $z$}}
\newcommand{\underlineboldI}{{\underline{\mathbf{I}}}}  
\newcommand{\underlineboldII}{{\underline{\mathbf{I}\!\mathbf{I}}}}
\newcommand{\LARGEdot}{\raisebox{-.4ex}{\LARGE $\cdot$}}
\newcommand{\tinybullet}{{\raisebox{.2ex}{\tiny $\bullet$}}}							
\begin{document}

\enlargethispage{24cm}

\begin{titlepage}

$ $

\vspace{-1.5cm} 

\noindent\hspace{-1cm}
\parbox{6cm}{\small March 2022}\
   \hspace{7cm}\
   \parbox[t]{6cm}{\small\flushleft
                arXiv:yymm.nnnnn [math.AG] \\
                NCS(2): flag variety  
				}

\vspace{2cm}

\centerline{\Large\bf Soft noncommutative flag schemes}
 

\bigskip

\vspace{3em}

\centerline{\large
  Chien-Hao Liu  \hspace{1ex} and \hspace{1ex}
  Shing-Tung Yau
}

\vspace{4em}

\begin{quotation}
\centerline{\bf Abstract}
\vspace{0.3cm}

\baselineskip 12pt  
{\small
  The construction of soft noncommutative schemes via toric geometry in
    arXiv:2108.05328 [math.AG] (D(15.1), NCS(1))
   can be generalized and applied to
    a commutative scheme with a distinguished atlas of reasonably good affine local coordinate charts.
 In the current notes we carry out this exercise for flag varieties.
 } 
\end{quotation}

\vspace{19em}

\baselineskip 12pt
{\footnotesize
\noindent
{\bf Key words:} \parbox[t]{14cm}{flag variety, soft noncommutative scheme;
      distinguished atlas, soft noncommutative flag scheme; target-space for D-branes
 }} 

 \bigskip

\noindent {\small MSC number 2020: 14M15; 14A22, 14A15; 81T30
} 

\bigskip

\baselineskip 10pt
{\scriptsize
\noindent{\bf Acknowledgements.}
 We thank
   Tsung-Ju Lee and Yun Shi
      for organizing the Algebraic Geometry in String Theory Seminar at CMSA that nourishes us intellectually.	
 S.-T.\ Yau thanks also Tsinghua University, China, for hospitality.
 C.-H.L.\ thanks in addition
   Fei Xie
      for discussions of an issue in Noncommutative Algebraic Geometry, fall 2021;
   Fernando Quevedo,,
   Percy Liang, Dorsa Sadigh,,
   Leonard Bernstein (1918-1990),,
   Xiang Luo
      for lecture series/open courses:
   John Madore,,
   William Christ, Richard DeLone (1928-1984), Vernon Kliewer (1927-2017),
     Lewis Rowell, William Thomson,,
   Elisabeth K\"{u}bler-Ross (1926-2004),,
   Hui-Wen Teng
      for books:   	
   Gareth Green,,
   Jung-Hsuan Ko,,
   Oscar Osicki
      for educational series:
   creators of the HalloDeutschschule.ch for program
   that accompany the brewing and preparation of works from projects, fall 2021-spring 2022;
  Dan Forrest
      for the choral  piece {\sl Requiem for the Living} and
  Bob Jones University Chorale and
  Rivertree Singers \& Friends,
  both conducted by Warren Cook,
      for their performances that accompany the typing of the current notes;
  Pei-Jung Chen
      for the biweekly communications on a work of J.S.\ Bach (1685-1750)
	    that set another anchor for life in the COVID-19 times
      and
  Ling-Miao Chou
        for the daily exchange of progress,
		comments that improve the illustration,
		and the tremendous moral support.
} 

\end{titlepage}

\newpage

\enlargethispage{24cm}
\begin{titlepage}

$ $

\vspace{6em}

\centerline{\small\it
 Dedicated to those who fought or are fighting on the front lines of the COVID-19 pandemic.}

%
 
 %
 %

\end{titlepage}


\newpage
$ $

\vspace{-3em}

\centerline{\sc
 Soft Noncommutative Flag Schemes}

\vspace{1.2em}


\begin{flushleft}
{\Large\bf 0. Introduction and outline}
\end{flushleft}
Beyond toric varieties, another class of varieties that admit versatile use in both mathematics and physics ---
  including Mirror Symmetry ---
 are flag varieties.
Indeed, except that the labelling of distinguished charts cannot be realized by a fan,
 their construction from gluing a finite collection of affine varieties,
          each isomorphic to an affine space $\mathbf{A}^d$ for some common $d$,
   shares similar features of toric varieties.
From this perspective, it is very natural to expect that
  the construction of {\it soft noncommutative schemes} via toric geometry in
     [L-Y2] (arXiv:2108.05328 [math.AG] (D(15.1), NCS(1)))
  can be extended to flag varieties as well.
The details are carried out in this work.
Similar to soft noncommutative toric schemes,
 this gives a version of `noncommutative flag schemes'
   that may serve as target-spaces for dynamical D-branes in String Theory, cf. [L-Y2: Sec.\ 4].

As a side remark, it should be noted that
 noncommutatization or quantization of flag manifolds or varieties has been a topic of interest for long, e.g.\ [M-S],
 though there is no obvious connection between soft noncommutative flag schemes constructed in the current work
 and any previous version of `noncommutative flag manifolds or varieties' that we know of.

\bigskip

\noindent
{\bf Convention.}
 References for standard notations, terminology, operations and facts are\\
  (1) toric geometry: [Fu2];  \hfill  
  (2) aspects of noncommutative algebraic geometry: [B-R-S-S-W];\\
  (3) (commutative) algebraic geometry: [E-H], [Ha];  \\  
  (4) Grassmann or flag manifolds or varieties: [B-T], [Fu1], [G-H].
 \begin{itemize}
  %
  %
  %
  
  \item[$\cdot$]
  All commutative schemes are over ${\Bbb C}$ and Noetherian.
  
  \item[$\cdot$]
  {\it Index-related sets} $I$, $I_j$, $\mathbf{I}$, $\underlineboldI$, $\underlineboldII$ 
     from the set $\{1, \cdots, n\}$
   vs.\ {\it ideal}  $I$ of a ring.
  
  
  \item[$\cdot$]
  Basic terminology of soft noncommutative schemes follows [L-Y2] when applicable.

 \end{itemize}

\bigskip
   
\begin{flushleft}
{\bf Outline}
\end{flushleft}
\nopagebreak
{\small
\baselineskip 12pt  
\begin{itemize}
 \item[0]
  Introduction      \hfill 1

 \item[1]
  \makebox[42.8em][s]{Flag varieties and their distinguished atlas     \hfill 2}
  \vspace{-3.4ex}
  \begin{itemize}	 	
    \item[$\LARGEdot$]	
    A distinguished atlas on a flag variety
	  \dotfill \makebox[2em][r]{2}

    \item[$\LARGEdot$]
	The reference chart $U_{\mathbf{I}_0}\in {\cal U}_0$ and its subordinates
	  \dotfill \makebox[2em][r]{3}
	
    \item[$\LARGEdot$]
	A general chart $U_{\mathbf{I}}\in{\cal U}_0$ and its subordinates
	  \dotfill \makebox[2em][r]{5}
	
    \item[$\LARGEdot$]
	Realization of all $R_\tinybullet$ in the master ring
	  $R_{\underline{\mathbf{I}\!\mathbf{I}}}$
	  for {\it Fl}\,$(d_1, \cdots, d_r; n)$
	   \dotfill \makebox[2em][r]{6}		
  \end{itemize}
  
 \item[2]
  \makebox[42.8em][s]{Soft noncommutative flag schemes and their construction  \hfill 7}
  \vspace{-3.4ex}
  \begin{itemize}	 	
    \item[$\LARGEdot$]	
    Soft noncommutative flag schemes
	   \dotfill \makebox[2em][r]{7}
   
	\item[$\LARGEdot$]
	Construction of soft noncommutative flag schemes
	   \dotfill \makebox[2em][r]{8}

    \item[$\LARGEdot$]
	Closed subschemes of $\breve{X}_\underlineboldII$
	   \dotfill \makebox[2em][r]{9}	
	
    \item[$\LARGEdot$]
    Example: {\it Gr}\,$(2; 4)$
	   \dotfill \makebox[2em][r]{10}
	
	\item[$\LARGEdot$]
	Soft noncommutative Calabi-Yau schemes in $\breve{X}_\underlineboldII$ associated to {\it Gr}\,$(2; 4)$?	
	   \dotfill \makebox[2em][r]{11}
  \end{itemize}
  
 %
 %
 %
 %
 %
\end{itemize}
} 

\newpage

\section{Flag varieties and their distinguished atlas}

A contravariant construction of a flag variety in terms of a distinguished gluing system of rings
 is presented in this section.
All the material here is standard
 but the presentation gives an immediate comparison of flag varieties to toric varieties.

\bigskip

\begin{flushleft}
{\bf A distinguished atlas on a flag variety}
\end{flushleft}
Let  ${\Bbb C}^n$ be the $n$-dimensional vector space over $\Bbb C$.
For integers $0 < d_1< d_2< \cdots < d_r < n$,
 let $\Fl(d_1,\cdots, d_r; n)$ denote the flag variety
   whose ${\Bbb C}$-points are flags of vector subspaces
    $$
	   L_1\subset L_2\subset \cdots \subset L_r \subset {\Bbb C}^n
	$$
   with $\dim L_i=d_i$.
Fix an isomorphism ${\Bbb C}^n\simeq {\Bbb  C}^{\oplus n}$ and
 the standard basis $e_1,\,\cdots\,,e_n$ under the isomorphism,
 with $e_i:= (0,\cdots, 0, 1, 0,\cdots, 0)$ where $1$ is in the $i$-th entry.
For a nonempty  subset $I\subset \{1,\cdots,n\}$,
  let $H_I$ be the vector subspace $\Span_{\Bbb C}\{e_i | i\in I\}$ of ${\Bbb C}^n$.
The canonical decomposition ${\Bbb C}^n=H_I\oplus H_{I^c}$, where $I^c:= \{1,\cdots, n\}-I$,
 defines a projection map $\pi_I: {\Bbb C}^n\rightarrow H_I$.
For an inclusion sequence $\mathbf{I}$ of subsets
    $$
      I_1\subset I_2 \subset \cdots \subset I_r \subset \{1,\cdots, n\}
    $$
    with cardinality $|I_i|=d_i$,
  let $U_\mathbf{I}$ be the open affine subvariety of $\Fl(d_1,\cdots, d_r; n)$
   whose ${\Bbb C}$-points are flags
    $$
	   L_1\subset L_2\subset \cdots \subset L_r \subset {\Bbb C}^n
	$$
	such that the restriction $\pi_{I_i}: L_i \rightarrow H_{I_i}$
	  is a ${\Bbb C}$-vector-space  isomorphism for $i=1,\cdots, r$.
As a variety each $U_\mathbf{I}$ is isomorphic to the affine space
 $\mathbf{A}^{d_r(n-d_r)+ d_{r-1}(d_r-d_{r-1})+\cdots + d_1(d_2-d_1)}$.
For $\underline{\mathbf{I}}=\{\mathbf {I}_1, \cdots, \mathbf {I}_l\}$,
  where $\mathbf{I}_j$ are as above,
 let
 $$
  U_{\underline{\mathbf{I}}}  \; :=\;  \bigcap_{j=1}^l U_{\mathbf{I}_j}\,.
 $$
Then, though no longer an affine space in general,
 $U_{\underline{\mathbf{I}}}$ remains affine and the collection satisfies
 $$
    U_{\underline{\mathbf{I}}}\cap U_{\underline{\mathbf{J}}}\;
	  =\;  U_{\underline{\mathbf{I}}\,\cup\, \underline{\mathbf{J}}}\,.
 $$

\medskip

\begin{sdefinition} {\bf [admissible sequence and admissible chain]}\; {\rm
 An inclusion sequence $\mathbf{I}$ of subsets
   $I_1\subset I_2 \subset \cdots \subset I_r \subset \{1,\cdots, n\}$ with cardinality $|I_i|=d_i$
  is called an {\it admissible sequence}  of subsets of $\{1,\cdots,n\}$.
 A collection $\underline{\mathbf{I}}=\{\mathbf{I}_1,\cdots,\mathbf{I}_l\}$
   of distinct admissible sequences of subsets of $\{1,\cdots,n\}$
   is a called an {\it admissible chain} from $\{1,\cdots,n\}$.
}\end{sdefinition}

\medskip

Note that
 $\{U_\mathbf{I}\,|\,\mbox{$\mathbf{I}$: admissible sequence}\}$ is an affine open cover
   of $\Fl(d_1, \cdots, d_r; n)$
 and that,
  for $\mathbf I: I_1\subset \cdots \subset I_r\subset \{1, \cdots, n\}$ an admissible sequence,
  the flag $H_{I_1}\subset \cdots \subset H_{I_r}\subset {\Bbb C}^n$ from nested coordinate planes
  lies only in $U_\mathbf{I}$.
Thus, this is a  minimal cover in the sense that the collection in this cover cannot be reduced to maintain a cover.
  
\medskip

\begin{sdefinition} {\bf [distinguished atlas on $\Fl(d_1, \cdots, d_r; n)$]}\; {\rm
 By construction the finite system
 ${\cal U}_0 := \{U_{\underline{\mathbf{I}}}\}_{\underline{\mathbf{I}}}$
  of affine open subsets $U_{\underline{\mathbf{I}}}$ of $\Fl(d_1, \cdots, d_r; n)$,
     where $\underline{\mathbf{I}}$ runs over all admissible chains from $\{1,\cdots,n\}$,
  covers $\Fl(d_1, \cdots, d_r; n)$ and is closed under taking intersections;
 it is called the {\it distinguished atlas} on $\Fl(d_1,\cdots, d_r; n)$
  associated to the ${\Bbb C}$-vector-space isomorphism ${\Bbb C}^n\simeq {\Bbb C}^{\oplus n}$.
 For $U_{\underline{\mathbf{I}}}, U_{\underline{\mathbf{J}}}\in {\cal U}_0$,
  we say that
   {\it $U_{\underline{\mathbf{I}}}$ is subordinate to $U_{\underline{\mathbf{J}}}$}
    or interchangeably that
   {\it $U_{\underline{\mathbf{I}}}$ is a subordinate of $U_{\underline{\mathbf{J}}}$}
  if $U_{\underline{\mathbf{I}}}\subset U_{\underline{\mathbf{J}}}$.
 A {\it maximal chart} in ${\cal U}_0$ with respect to this partial order $\subset$  is exactly
  $U_{\mathbf{I}}$ for some admissible sequence $\mathbf{I}$.
}\end{sdefinition}

\medskip

\begin{sremark} $[$weight polytope$]$\; {\rm
 When $\Fl(d_1, \cdots, d_r; n)$ is realized as the orbit of a highest weight vector
   in the projectivization $\mathbf{P}V$ of a complex representation $V$ of the special unitary group $\SU(n)$
   of rank $n-1$,
 the set
   $\underline{\mathbf{I}\!\mathbf{I}}
     :=\{\underline{\mathbf{I}}\,|\,\mbox{$\underline{\mathbf{I}}$ is an admissible sequence}\}$
   that labels maximal charts in ${\cal U}_0$ can be identified with the set  of vertices of the weight polytope
   in the dual Carton subalgebra of $\SU(n)$ associated to the representation.
 (Cf.\ [F-H: Sec.\ 23.3], [M-S, Sec.\ 3].)
}\end{sremark}

\medskip

\noindent
We shall now describe this distinguished atlas contravariantly in terms of local coordinate rings and their localizations.

\bigskip

\begin{flushleft}
{\bf The reference chart $U_{\mathbf{I}_0}\in {\cal U}_0$ and its subordinates}
\end{flushleft}
Let $\mathbf{I}_0$ be the admissible sequence
 $$
   \{1, \cdots, d_1\}\; \subset\; \{1, \cdots, d_2\} \; \subset\; \cdots\;
    \subset\; \{1, \cdots, d_r \}\; \subset\; \{1, \cdots, n \}\,.
 $$
Then the standard coordinate-ring associated to the chart $U_{\mathbf{I}_0}$
 is given by the polynomial ring over ${\Bbb C}$
 $$
  R_{\mathbf{I}_0}\; :=\;
  {\Bbb C}\left[
       z_{ij}\left|
	     \begin{array}{l}
	       \mbox{$j\in \{d_k+1,\cdots, n\}$ for $i\in \{d_{k-1}+1,\cdots, d_k\}$}, \\
          \mbox{$k=1,\cdots, r$ with $d_0 = 0$ by convention}		
		 \end{array}
		            \right. \hspace{-1ex}\right]
 $$
 from the blocked matrix presentation of a flag
    $\mathbf{L}:\;L_1\subset L_2\subset \cdots \subset L_r \subset {\Bbb C}^n$
    that corresponds to a ${\Bbb C}$-point on $U_{\mathbf{I}_0}$
 $$
  M_{\mathbf{I}_0}(\mathbf{L})\;
  :=\;
  \left[
   \begin{array}{l}
    \colorbox{gray07}{\hbox to 2ex{\vbox to 2ex{}}}\hspace{.6ex}
       \colorbox{gray25}{\hbox to 4ex{\vbox to 2ex{}}}\hspace{.6ex}
	   \colorbox{gray25}{\hbox to 3ex{\vbox to 2ex{}}}\hspace{.6ex}	      		
	   \colorbox{gray25}{\hbox to 7ex{\vbox to 2ex{}}}\hspace{.6ex}
	   \colorbox{gray25}{\hbox to 12ex{\vbox to 2ex{}}}\hspace{.6ex}	    	
		    \\[.4ex]
	\colorbox{gray02}{\hbox to 2ex{\vbox to 4ex{}}}\hspace{.6ex}
	   \colorbox{gray07}{\hbox to 4ex{\vbox to 4ex{}}}\hspace{.6ex}
       \colorbox{gray25}{\hbox to 3ex{\vbox to 4ex{}}}\hspace{.6ex}
	   \colorbox{gray25}{\hbox to 7ex{\vbox to 4ex{}}}\hspace{.6ex}
	   \colorbox{gray25}{\hbox to 12ex{\vbox to 4ex{}}}
		   \\[.4ex]
   	\colorbox{gray02}{\hbox to 2ex{\vbox to 3ex{}}}\hspace{.6ex}
	   \colorbox{gray02}{\hbox to 4ex{\vbox to 3ex{}}}\hspace{.6ex}	
       \colorbox{gray07}{\hbox to 3ex{\vbox to 3ex{}}}\hspace{.6ex}
       \colorbox{gray25}{\hbox to 7ex{\vbox to 3ex{}}}\hspace{.6ex}
	   \colorbox{gray25}{\hbox to 12ex{\vbox to 3ex{}}}
		   \\[.4ex]		
	\colorbox{gray02}{\hbox to 2ex{\vbox to 7ex{}}}\hspace{.6ex}
	   \colorbox{gray02}{\hbox to 4ex{\vbox to 7ex{}}}\hspace{.6ex}	
       \colorbox{gray02}{\hbox to 3ex{\vbox to 7ex{}}}\hspace{.6ex}
       \colorbox{gray07}{\hbox to 7ex{\vbox to 7ex{}}}\hspace{.6ex} 		
	   \colorbox{gray25}{\hbox to 12ex{\vbox to 7ex{}}}
   \end{array}
  \right]_{d_r\times n}
 $$
 where
  \begin{itemize}
   \item[$\LARGEdot$]
   the diagonal blocks (in    \colorbox{gray07}{\hbox to 1ex{\vbox to 1ex{}}})
   from upper-left to lower-right are respectively the identity matrices
   $\Id_{d_1\times d_1},\, \Id_{(d_2-d_1)\times(d_2-d_1) },\,\cdots,
     \Id_{(n-d_r)\times(n-d_r)}   $;

   \item[$\LARGEdot$]	
   all the blocks (in    \colorbox{gray02}{\hbox to 1ex{\vbox to 1ex{}}}) below the diagonal blocks
   are zero;
   
   \item[$\LARGEdot$]
   for $i=1,\cdots, r$, $L_i$ is the span of the upper $d_i$-many row-vectors of
    $M_{\mathbf{I}_0}(\mathbf{L})$.
  \end{itemize}
When $\mathbf{L}$ varies in $U_{\mathbf{I}_0}$,
 the $(i,j)$-entry of $M_{\mathbf{I}_0}(\mathbf{L})$
   that sits in a block (in    \colorbox{gray25}{\hbox to 1ex{\vbox to 1ex{}}})
	 to the right of a diagonal block gives rise to the coordinate function $z_{ij}$
	  in the coordinate-ring $R_{\mathbf{I}_0}$ of $U_{\mathbf{I}_0}$.
As a bookkeeping device for later discussions, denote
 \begin{eqnarray*}
  M_{\mathbf{I}_0}(\boldz)
    & := & \mbox{the above $d_r\times n$ matrix
              with every $(i,j)$-entry in the \colorbox{gray25}{\hbox to 1ex{\vbox to 1ex{}}}\,-region}
			  \\[-.6ex]
    && 	\mbox{(i.e. blocks to the right of diagonal blocks) replaced by its associate $z_{ij}$}
	          \\[-.6ex]
	&&   \mbox{while keeping all the digonal blocks
	                         (cf.\ the \colorbox{gray07}{\hbox to 1ex{\vbox to 1ex{}}}\,-region)}
			  \\[-.6ex]
    &&   \mbox{and the zero-blocks
						     (cf.\ the \colorbox{gray02}{\hbox to 1ex{\vbox to 1ex{}}}\,-region).}
 \end{eqnarray*}
Thus, $R_{\mathbf{I}_0}$ is the polynomial ring over ${\Bbb C}$ generated by entries of
 $M_{\mathbf{I}_0}(\boldz)$.
In notation,
 $$
   R_{\mathbf{I}_0}\;=\; {\Bbb C}[M_{\mathbf{I}_0}(\boldz)].
 $$
  
For a general admissible  sequence
  $\mathbf{I}:\: I_1 \subset I_2 \subset \cdots \subset I_r \subset \{1,\cdots, n\}$ with	
 $$
  \begin{array}{rclccl}
   I_1  & =  & \{i_{1,1},\cdots, i_{1,d_1}\},
           &&&  i_{1,1} < \cdots < i_{1,d_1};
           \\[1.2ex]
   I_2  & = & \{i_{1,1},\cdots, i_{1,d_1}, i_{2,1}, \cdots, i_{2, d_2-d_1}\},
           &&& i_{2,1} < \cdots < i_{2, d_2-d_1};
           \\[1.2ex]
     & \vdots & \\[1.2ex]
   I_r  & = & \{i_{1,1},\cdots, i_{1,d_1}, i_{2,1}, \cdots, i_{2, d_2-d_1}, \cdots,
                       i_{r,1}, \cdots, i_{r, d_r-d_{r-1}}\},
		   &&&   i_{r,1} < \cdots < i_{r, d_r-d_{r-1}}
  \end{array}
 $$
 and an $ r\times n$ matrix $M$,
let us introduce two sets of convenient bookkeeping notations as follows.

\smallskip

\noindent
(1) The {\it distinguished square submatrices}
  $$
    M^{I_1}, M^{I_2}, \cdots, M^{I_r}  \hspace{2em}\mbox{and}\hspace{2em}
    M^{I_2-I_1}, M^{I_3-I_2}, \cdots, M^{I_r-I_{r-1}}
  $$
  of $M$, where
 $$
  \begin{array}{rcl}
   M^{I_j}  & :=
     & \mbox{the $d_j\times d_j$ submatrix of $M$
	     by removing all the $j^\prime$-th rows and} \\[.2ex]
     &&	\mbox{the $j^{\prime\prime}$-th columns of $M$
		 with $j^\prime>d_j$ and $j^{\prime\prime}\notin I_j$},
		\\[1.6ex]
   M^{I_j-I_{j-1}}  & :=
     & \mbox{the $(d_j-d_{j-1})\times (d_j-d_{j-1})$ submatrix of $M$
	     by removing all the $j^\prime$-th rows} \\[.2ex]
     &&	\mbox{and the $j^{\prime\prime}$-th columns of $M$
		 with $j^\prime \le d_{j-1}$ or $j^\prime>d_j$
		        and $j^{\prime\prime}\notin I_j-I_{j-1}$}.		
  \end{array}
 $$
Note that, by construction, $M^{I_j-I_{j-1}}$ is a square submatrix of $M^{I_j}$ as well   and
  that the collection $M^{I_1}, M^{I_2-I_1}, M^{I_3-I_2}, \cdots, M^{I_r-I_{r-1}}$ are
    non-overlapping in $M$.

\smallskip

\noindent
(2) The {\it characteristic map} $\chi_\mathbf{I}: \{1, \cdots, n\}\rightarrow \{1, \cdots, n\}$,
          $k\mapsto  i_{j, k-d_{j-1}}$ if $k \in \{d_{j-1}+1, \cdots, d_j\}$, $j=1, \cdots, r, r+1$.
 This is the unique set-automorphism of $\{1, \cdots, n\}$ that takes $\{1, \cdots, d_j\}$ to $I_j$
  such that the restriction $\{d_{j-1}+1, \cdots, d_j\}\rightarrow I_j-I_{j-1}$ is order-preserving
    for all $j=1, \cdots, r, r+1$.
 Here
   $d_0=0$,
   $d_{r+1}=n$,
   $I_{r+1}= \{1, \cdots, n\}$,
   $I_{r+1}-I_r=  \{i_{r+1, 1}, \cdots, i_{r+1, n-d_r}\}$
     with $i_{r+1, 1}< \cdots < i_{r+1, n-d_r}$
  by convention.
 Associated to $\chi_\mathbf{I}$ is the {\it characteristic $n\times n$ matrix} $C_{\chi_\mathbf{I}}$
  such that $MC_{\chi_\mathbf{I}} = $ the $r\times n$ matrix obtained from $M$
   by exchanging the $k$-th column with $\chi_\mathbf{I}(k)$-th column, $k=1, \cdots, n$.
 Note that $C_{\chi_\mathbf{I}}^{\;2}= \Id_{n\times n}$.	

\smallskip
	
Let $\mathbf{I}$ now be an admissible sequence of subsets of $\{1,\cdots,n\}$
   that is distinct from $\mathbf{I}_0$    and
consider the affine open subset
   $U_{\{\mathbf{I}_0, \mathbf{I}\}}:= U_{\mathbf{I}_0}\cap U_{\mathbf{I}}$
   of $\Fl(d_1, \cdots, d_r; n )$
 associated to the chain $\{\mathbf{I}_0, \mathbf {I}\}$.
A flag $\mathbf{L}\in U_{\mathbf{I}_0}$ lies also in $U_{\mathbf{I}}$
 if and only if the restriction of the projection map $\pi_{I_j}: L_j\rightarrow H_{I_j}$ is an isomorphism
  for $I_j\in\mathbf{I}$, $j=1,\cdots, r$.
The latter is true if and only if
 all the square submatrices
   $$
     M_{\mathbf{I}_0}(\mathbf{L})^{I_1},\,
	 M_{\mathbf{I}_0}(\mathbf{L})^{I_2},\,\cdots,\,
	 M_{\mathbf{I}_0}(\mathbf{L})^{I_r}
   $$
   of $M_{\mathbf{I}_0}(\mathbf{L})$ are invertible, which is equivalent to that
 all the square submatrices
  $$
    M_{\mathbf{I}_0}(\mathbf{L})^{I_1},\,
	M_{\mathbf{I}_0}(\mathbf{L})^{I_2-I_1},\,\cdots,\,
	M_{\mathbf{I}_0}(\mathbf{L})^{I_r-I_{r-1}}
  $$
  of $M_{\mathbf{I}_0}(\mathbf{L})$ are invertible.
It follows that
 \begin{itemize}
  \item[$\LARGEdot$]
   The function-ring $R_{\{\mathbf{I_0}, \mathbf{I}\}}$
      of $U_{\{\mathbf{I_0}, \mathbf{I}\}}$
	  is given by the localization of $R_{\mathbf{I}_0}$ at the multiplicatively closed subset generated by
	 $$
	   S_{\mathbf{I}_0; \mathbf{I}}\;:=\;
	    \{ \determinant M_{\mathbf{I}_0}(\boldz)^{I_1},\,
	         \determinant M_{\mathbf{I}_0}(\boldz)^{I_2-I_1},\,\cdots,\,
	         \determinant M_{\mathbf{I}_0}(\boldz)^{I_r-I_{r-1}}		
		\},
	 $$
	 where $\determinant \bullet$ is the determinant of the square matrix $\bullet$\,.
 \end{itemize}
With slight abuse of notation, we shall denote this localized ring as
     $R_{\mathbf{I}_0}[S_{\mathbf{I}_0; \mathbf{I}}^{\;\;-1}]$.
Hence
 $$
  R_{\{\mathbf{I_0}, \mathbf{I}\}}\;
   \simeq\;     R_{\mathbf{I}_0}[S_{\mathbf{I}_0; \mathbf{I}}^{\;-1}]\;
   \simeq\;
      \frac{{\Bbb C}[z_{ij}, w_k\, |\,
                                   \mbox{position-$(i,j)$ in blocks
                                      in \colorbox{gray25}{\hbox to 1ex{\vbox to 1ex{}}}\,;
								      $k=1, \cdots, r$
								    } ]}
               {(w_k\cdot \determinant M_{\mathbf{I}_0}(\boldz)^{I_k-I_{k-1}} -1
			          \,|\, k=1, \cdots, r)\rule{0ex}{2.2ex}}\,.									
 $$
Here,
  the numerator is the polynomial ring generated by the variables $z_{ij}$'s and $w_k$'s as specified,
  the denominator is the ideal of the numerator generated by the elements as indicated,  and
  $I_0$ is the empty set by convention.
  
The same discussion applies to the affine open subset $U_{\underline{\mathbf{I}}}$ of
 $U_{\mathbf{I}_0}\subset \Fl(d_1, \cdots, d_r; n)$ associated to a higher chain
 $\underline{\mathbf{I}} =\{\mathbf{I}_0, \mathbf{I}_1, \cdots, \mathbf{I}_l  \}$;
which gives:
 \begin{itemize}
  \item[$\LARGEdot$]
   The function-ring $R_{\underline{\mathbf{I}}}$
      of $U_{\underline{\mathbf{I}}}$
	  is given by the localization of $R_{\mathbf{I}_0}$ at the multiplicatively closed subset generated by
	 $$
	   S_{\mathbf{I}_0; \{\mathbf{I}_1, \cdots, \mathbf{I}_l\}   }\;:=\;
	    \{  \determinant M_{\mathbf{I}_0}(\boldz)^{I_{i, j}-I_{i, j-1}}		
		        \,|\, i=1, \cdots, l;\, j=1, \cdots, r
		  \} ,
	 $$
   where $I_{i, \tinybullet}\in \mathbf{I}_i$ and $I_{\tinybullet, 0}$ are the empty set by convention.
   I.e.\ (with slight abuse of notation)
   $$
      R_{\underline{\mathbf{I}}}\;
       \simeq\;
	     R_{\mathbf{I}_0}
	     [S_{\mathbf{I}_0; \{\mathbf{I}_1, \cdots, \mathbf{I}_l\} }^{\;-1}] \; \
		 =\; {\Bbb C}[ M_{\mathbf{I}_0}(\boldz)]
		                            [S_{\mathbf{I}_0; \{\mathbf{I}_1, \cdots, \mathbf{I}_l\}}^{\;-1}]\;
		 =\; {\Bbb C}[ M_{\mathbf{I}_0}(\boldz)]
		                            [S_{\mathbf{I}_0; \{\mathbf{I}_1, \cdots, \mathbf{I}_l\}, \circ}^{\;-1}].
   $$
 \end{itemize}
 Here,
   notice that there could be repeating elements in the set
     $S_{\mathbf{I}_0; \{\mathbf{I}_1, \cdots, \mathbf{I}_l\}   }$,
       which are redundant as long as localizations of $R_{\mathbf{I}_0}$ are concerned,   and
   $S_{\mathbf{I}_0; \{\mathbf{I}_1, \cdots, \mathbf{I}_l\}, \circ}$
    is the pruned $S_{\mathbf{I}_0; \{\mathbf{I}_1, \cdots, \mathbf{I}_l\}   }$	
   that removes all such repeating elements.
In particular,

\medskip

\begin{sdefinition} {\bf [master ring for $\Fl(d_1, \cdots, d_r; n)$]}\; {\rm
  Recall the set
    $\underline{\mathbf{I\!I}}=\{\mathbf{I}_0\}\amalg \underline{\mathbf{I\!I}}^\prime$
    of all admissible sequences of subsets of $\{1, \cdots, n\}$.
  Note that $\underline{\mathbf{I\!I}}$ has cardinality
     $$
	   {n \choose d_r} {d_r \choose d_{r-1}} \cdots {d_2 \choose d_1}\;
	    =\;  \frac{n!}{(n-d_r)! \,(d_r-d_{r-1})!\, \cdots\, (d_2-d_1)!\,  d_1!   \rule{0ex}{2ex}}
	 $$
	 as a set of admissible sequences
    and is the the maximal admissible chain.
  We shall call
    $$
      R_{\underline{\mathbf{I\!I}}}\;   \simeq\;
	  R_{\mathbf{I}_0} [S_{\mathbf{I_0}; \underline{\mathbf{I\!I}}^\prime, \circ}^{\;-1}]\;
      =\; {\Bbb C}[M_{\mathbf{I}_0}(\boldz)]
	                              [S_{\mathbf{I_0}; \underline{\mathbf{I\!I}}^\prime, \circ}^{\;-1}]\; 								  
    $$	
  the {\it master ring} for the flag variety $\Fl(d_1, \cdots, d_r; n)$.
 This is the function-ring of the smallest affine open subset
   $\bigcap_{\mathbf{I}\in \underline{\mathbf{I}\!\mathbf{I}}} U_\mathbf{I}$
   in the distinguished atlas ${\cal U}_0$ on $\Fl(d_1, \cdots, d_r; n)$.
}\end{sdefinition}

\medskip

\begin{sdefinition} {\bf [distinguished submatrix]}\; {\rm
 For a $d_r\times n$  matrix $M$ and $i_1, i_2\in \{1, \cdots, d_r\}$ with $i_1\le i_2$,
    denote by $_{[i_1, i_2]}M$
    the $(i_2-i_1+1)\times n$ submatrix of $M$ by removing all $i$-th rows with $i <i_1$ or $i>i_2$.
 Then explicitly,
    $S_{\mathbf{I}_0; \underline{\mathbf{I}\!\mathbf{I}}^\prime,\, \circ}$
	 is the set of the determinant of all $(d_i-d_{i-1})\times (d_i-d_{i-1})$ submatrices of
	$_{[d_{i-1}+1, d_i]}M_{\mathbf{I}_0}(\boldz)$, $i=1, \cdots, r$, $d_0=0$ by convention.
 For convenience, we shall call any of the latter square submatrices
   a {\it distinguished submatrix} of $M_{\mathbf{I}_0}(\boldz)$.
}\end{sdefinition}

\bigskip

\begin{flushleft}
{\bf A general chart $U_{\mathbf{I}}\in {\cal U}_0$ and its subordinates}
\end{flushleft}
For a general admissible sequence $\mathbf{I}$,
 recall the characteristic map $\chi_\mathbf{I}$ and the characteristic matrix $C_{\chi_\mathbf{I}}$.
Then, similar to the function-ring $R_{\mathbf{I}_0}$ of $U_0$,
 the function-ring $R_\mathbf{I}$ of $U_\mathbf{I}$ is given by
 the polynomial ring over ${\Bbb C}$ generated by the entries of
  $$
    M_\mathbf{I}(\boldx)\;:=\; M_{\mathbf{I}_0}(\boldx) C_{\chi_I}\,.
  $$
Let $\mathbf{J}$ be another admissible sequence   and
  $R_\mathbf{J}$ be the polynomial-ring over ${\Bbb C}$ generated by the entries of
  $M_\mathbf{J}(\boldy):= M_{\mathbf{I}_0}(\boldy) C_{\chi_\mathbf{J}}$,
  which gives the function-ring of $U_\mathbf{J}$.
Then,
  the function-ring of $U_{\{\mathbf{I}, \mathbf{J}\}}:= U_\mathbf{I}\cap U_\mathbf{J}$
  is given by
  $$
   R_{\{\mathbf{I}, \mathbf{J}\}}\;
   \simeq\; R_\mathbf{I}[S_{\mathbf{I}; \mathbf{J}}^{\;-1}]\;
   \simeq\; R_\mathbf{J}[S_{\mathbf{J};\mathbf{I}}^{\;-1}]\,,
  $$
  where
   \begin{eqnarray*}
    S_{\mathbf{I}; \mathbf{J}}\;
	 & := &  \{ \determinant M_\mathbf{I}(\boldx)^{J_1},\,
	                  \determinant M_\mathbf{I}(\boldx)^{J_2-J_1},\,\cdots,\,
	                  \determinant M_\mathbf{I}(\boldx)^{J_r-J_{r-1}}		
					 \},  \\[1.2ex]
    S_{\mathbf{J}; \mathbf{I}}\;
	 & := &  \{ \determinant M_\mathbf{J}(\boldy)^{I_1},\,
	                  \determinant M_\mathbf{J}(\boldy)^{I_2-I_1},\,\cdots,\,
	                  \determinant M_\mathbf{J}(\boldy)^{I_r-I_{r-1}}		
					 \},		
	 \end{eqnarray*}
   and the isomorphism
    $$
	 \phi^\sharp_{\mathbf{I}\mathbf{J}}\; :\;
	  R_\mathbf{J}[S_{\mathbf{J}; \mathbf{I}}^{\;-1}]\;
	  \stackrel{\sim}{\longrightarrow}\;
	  R_\mathbf{I}[S_{\mathbf{I}; \mathbf{J}}^{\;-1}]
	$$
	is given by the entry-wise correspondence of matrices:	
	$$	
	  M_\mathbf{J}(\boldy)\;   \longrightarrow\;
	    C_{\mathbf{I}; \mathbf{J}} M_\mathbf{I}(\boldx) ,
	$$
	where 	
	$C_{\mathbf{I}; \mathbf{J}}$ is the unique $r \times r$ matrix with entries
	in $R_\mathbf{I}[S_{\mathbf{I}; \mathbf{J}}^{\;-1}]$
	such that
	$C_{\mathbf{I}; \mathbf{J}}M_\mathbf{I}(\boldx) C_{\chi_\mathbf{J}}$
	 resumes the blocked matrix form
	 $M_{\mathbf{I}_0}(\mathbf{L})$ for a flag $\mathbf{L}\in U_0$.
Completely analogously for a deeper chain $\{\mathbf{I}_1, \cdots, \mathbf{I}_{l+1}\}$,
 all the isomorphisms
 $$
  R_{\underline{\mathbf{I}}_1}
    [S_{\underline{\mathbf{I}}_1^\prime;\,
	          \underline{\mathbf{I}}_2^\prime}^{\;-1}]\;
  \stackrel{\sim}{\longrightarrow}\;
  R_{\underline{\mathbf{I}}_2}
	[S_{\underline{\mathbf{I}}_1^{\prime\prime};\,
	          \underline{\mathbf{I}}_2^{\prime\prime}}^{\;-1}]
 $$
 are explicitly constructible this way for any decompositions
 $\{\mathbf{I}_1, \cdots, \mathbf{I}_{l+1}\}
   = \underline{\mathbf{I}}_1^\prime \cup \underline{\mathbf{I}}_2^\prime
  = \underline{\mathbf{I}}_1^{\prime\prime}\cup \underline{\mathbf{I}}_2^{\prime\prime}$
 of $\{\mathbf{I}_1, \cdots, \mathbf{I}_{l+1}\}$ into disjoint unions.

\bigskip
 
\begin{flushleft}
{\bf Realization of all $R_{\underline{\bullet}}$ in the master ring
         $R_{\underline{\mathbf{I}\!\mathbf{I}}}$ for $\Fl(d_1, \cdots, d_r; n)$}
\end{flushleft}
The built-in ${\Bbb C}$-algebra-homomorphisms
 $$
   \xymatrix{	
    R_\mathbf{I}\: \ar@{^{(}->}[r]	
	  & R_\mathbf{I}[S_{\mathbf{I}; \mathbf{I}_0}^{\;-1}]
	     \ar[rr]^-{\phi^\sharp_{\mathbf{I}_0\mathbf{I}}}_-\sim
	  && R_{\mathbf{I}_0}[S_{\mathbf{I}_0; \mathbf{I}}^{\;-1}]\: \ar@{^{(}->}[r]	   	
	  & R_{\underline{\mathbf{I}\!\mathbf{I}}},
	}
 $$
   for all admissible sequence $\mathbf{I}$,
 allow one to identify all the function-rings $R_\mathbf{I}$,
  and hence all $R_{\underline{\mathbf{J}}}$,
    where $\underline{\mathbf{J}}$ is an admissible chain,
  from localizations of some $R_{\mathbf{I}}$ as well,
 as ${\Bbb C}$-subalgebras of
  the master ${\Bbb C}$-algebra $R_{\underline{\mathbf{I}\!\mathbf{I}}}$
  for $\Fl(d_1, \cdots, d_r; n)$.	
Indeed, from the discussions in the previous two themes and with the notations therein,
 we shall make the following explicit identifications:
 \begin{itemize}
  \item[$\LARGEdot$]
  For $\mathbf{I}$ an admissible sequence,
   $$
    \begin{array}{rcl}
     R_\mathbf{I}&  = &
	   \mbox{the ${\Bbb C}$-subalgebra of
	                   $R_{\mathbf{I}_0}[S_{\mathbf{I}_0;\mathbf{I}}^{\;-1}]
  					      \subset R_{\underline{\mathbf{I}\!\mathbf{I}}}$
                    generated by the entries of} \\[.2ex]  && \mbox{the $r\times n$ matrix
                    $C_{\mathbf{I_0}; \mathbf{I}} M_{\mathbf{I}_0}(\boldz)$.}
    \end{array}					
   $$
   
  \item[$\LARGEdot$]
  For $\underline{\mathbf{I}}=\{\mathbf{I}_1, \cdots, \mathbf{I}_l\}$
   an admissible chain,
  $$
   R_{\underline{\mathbf{I}}}\;
    =\; \mbox{the ${\Bbb C}$-subalgebra of $R_{\underline{\mathbf{I}\!\mathbf{I}}}$
	      generated by $R_{\mathbf{I}_1} \cup \cdots \cup R_{\mathbf{I}_l}$.}
  $$
 \end{itemize}
and take this as the starting point to construct soft noncommutative flag schemes.

\medskip

\begin{snotation}
{\bf [contravariant description of $\Fl(d_1, \cdots, d_r; n)$ as $2^\underlineboldII$-system of subrings]}\;
{\rm
  One thus has a contravariant description of the flag variety $\Fl(d_1, \cdots, d_r; n)$
   as a $2^\underlineboldII$-system of ${\Bbb C}$-subalgebras in $R_\underlineboldII$.
  We shall denote this system by
   $$
     X_\underlineboldII :=({\cal U}_0, {\cal O}_{{\cal U}_0}),
   $$
    where ${\cal O}_{{\cal U}_0}$ is the {\it structure sheaf} of $\Fl(d_1, \cdots, d_r; n)$
	 adapted to ${\cal U}_0$,
	which assigns the subring $R_\underlineboldI \subset R_\underlineboldII$
	        to the chart $U_\underlineboldI\in {\cal U}_0$, for $\underlineboldI \subset \underlineboldII$.
}\end{snotation}
 
\medskip

\begin{sremark} $[$comparison to toric variety$\,]$\; {\rm
 The master ring $R_{\underline{\mathbf{I}\!\mathbf{I}}}$ for
   a flag variety
   will play the role of ${\Bbb C}[z_1,\cdots, z_n, z_1^{-1}, \cdots, z_n^{-1}]$ for some $n$
   (i.e.\ the function ring of the $n$-torus
          ${\Bbb T}^n_{\Bbb C}:=  ({\Bbb C}^\times)^n$ over ${\Bbb C}$
             or equivalently the group algebra ${\Bbb C}[M]$ associated to a lattice $M$ of rank $n$ for some $n$)
	in the case of toric varieties.
 In particular, the distinguished gluing system of ${\Bbb C}$-algebras associated to a flag variety
  is simply a special system of ${\Bbb C}$-subalgebras of the master ${\Bbb C}$-algebra
     $R_\underlineboldII$ labelled by admissible chains
  just like the distinguished gluing system of function-rings associated of a toric variety $X_\Delta/{\Bbb C}$
   being a system of ${\Bbb C}$-subalgebras in
   ${\Bbb C}[z_1,\cdots, z_n, z_1^{-1}, \cdots, z_n^{-1}]$ labelled by cones in the fan $\Delta$.
}\end{sremark}

\bigskip

\section{Soft noncommutative flag schemes and their construction}

With the preparation in Sec.\ 1,
 we can now proceed in the same way as [L-Y2] (D(15.1), NCS(1)),
    where the toric case is studied,
 to construct
  `soft noncommutative flag schemes' and their `soft subschemes'.
The details and an example are given in this section.

\bigskip

\begin{flushleft}
{\bf Soft noncommutative flag schemes}
\end{flushleft}
Let
 \begin{itemize}
  \item[$\LARGEdot$]
   $$
     \pi_0\;:\;
     \breve{R}_{\mathbf{I_0}}:= {\Bbb C}\langle M_{\mathbf{I}_0}(\breve{\boldz})\rangle
	   := {\Bbb C}\langle\breve{z}_{ij}\rangle_{ij}\;
       \longrightarrow\; R_{\mathbf{I}_0}={\Bbb C}[M_{\mathbf{I}_0}(\boldz)],
      \hspace{2em}\breve{z}_{ij}\;\longmapsto\; z_{ij},	
   $$
   be the (noncommutative) associative ${\Bbb C}$-algebra freely generated by the variable entries
     $\breve{z}_{ij}$'s of $M_{\mathbf{I}_0}(\breve{\boldz})$,
   with the built-in ${\Bbb C}$-algebra-epimorphism $\pi_0$ from commutatization
     that sends $\breve{z}_{ij}\in \breve{R}_{\mathbf{I}_0}$ to $z_{ij}\in R_{\mathbf{I}_0}$;
	
  \item[$\LARGEdot$]	
    $\breve{S}_{\mathbf{I}_0; \underline{\mathbf{I}\!\mathbf{I}}^\prime,\circ}$
	 be a lifting of  $S_{\mathbf{I}_0; \underline{\mathbf{I}\!\mathbf{I}}^\prime,\circ}$
	 in $\breve{R}_{\mathbf{I}_0}$ under $\pi_0$,
	 (i.e.\
    $\breve{S}_{\mathbf{I}_0; \underline{\mathbf{I}\!\mathbf{I}}^\prime,\circ}
	   \subset  \pi_0^{-1}(S_{\mathbf{I}_0; \underline{\mathbf{I}\!\mathbf{I}}^\prime,\circ})$
	  such that the restriction
	$\pi_0:
	    \breve{S}_{\mathbf{I}_0; \underline{\mathbf{I}\!\mathbf{I}}^\prime,\circ}
		 \rightarrow
		 S_{\mathbf{I}_0; \underline{\mathbf{I}\!\mathbf{I}}^\prime,\circ}$
	   is a set-isomorphism);   and
	
   \item[$\LARGEdot$]	
    $$
	  \breve{R}_{\underline{\mathbf{I}\!\mathbf{I}}}\;
	   :=\;  {\Bbb C}\langle M_{\mathbf{I}_0}(\breve{\boldz})\rangle
                 \langle
				   \breve{S}_{\mathbf{I}_0; \underline{\mathbf{I}\!\mathbf{I}}, \circ}^{\;-1} \rangle\;
	   :=\;  \frac{{\Bbb C}\langle \breve{z}_{ij}, \breve{w}_k\,|\, ij, k \rangle}
	         {(\breve{w}_k \breve{m}_k(\breve{\boldz})-1,
			         \breve{m}_k(\breve{\boldz}) \breve{w}_k -1 \,|\, k)\rule{0ex}{2ex}}\,.
	 $$
 \end{itemize}	
Here,
 \begin{itemize}
  \item[$\LARGEdot$]
  recall that
  $S_{\mathbf{I}_0; \underline{\mathbf{I}\!\mathbf{I}}^\prime,\circ}
       \subset {\Bbb C}[M_{\mathbf{I}_0}(\boldz)]$
   is the set  of the determinant of all distinguished submatrices of $M_{\mathbf{I}_0}(\boldz)$, now written as
   $\{m_k(\boldz)\,|\,
          k=1,\cdots, {n \choose d_1}{n\choose d_2-d_1} \cdots {n \choose d_r-d_{r-1}}  \}$;

   \item[$\LARGEdot$]
   $\breve{m}_k(\breve{\boldz})
       \in \breve{S}_{\mathbf{I}_0; \underline{\mathbf{I}\!\mathbf{I}}^\prime,\circ}$
      is the specified lifting of $m_k(\boldz)$;

    \item[$\LARGEdot$]	
   ${\Bbb C}\langle \breve{z}_{ij}, \breve{w}_k\, |\,ij, k \rangle
      = {\Bbb C}\langle \breve{z}_{ij}, \breve{w}_k \rangle_{ij, k}$
      is the associative ${\Bbb C}$-algebra
	  freely generated by the set $\{\breve{z}_{ij}, \breve{w}_k\}_{ij, k}$;    and
	
   \item[$\LARGEdot$]	
   $(\breve{w}_k \breve{m}_k(\breve{\boldz})-1,
			         \breve{m}_k(\breve{\boldz}) \breve{w}_k -1 \,|\, k)
		= (\breve{w}_k \breve{m}_k(\breve{\boldz})-1,
			         \breve{m}_k(\breve{\boldz}) \breve{w}_k -1)_k$
	   is the two-sided ideal of ${\Bbb C}\langle \breve{z}_{ij}, \breve{w}_k \rangle_{ij, k}$ 		
      	generated by the finite set of elements
		$\{\breve{w}_k \breve{m}_k(\breve{\boldz})-1,
			         \breve{m}_k(\breve{\boldz}) \breve{w}_k -1 \,|\, k\}$.		
 \end{itemize}
By construction, $\pi_0: \breve{R}_{\mathbf{I}_0}\rightarrow R_{\mathbf{I}_0}$ extends to
 a ${\Bbb C}$-algebra epimorphism from commutatization
 $$
  \pi_0\; :\;  \breve{R}_{\underline{\mathbf{I}\!\mathbf{I}}}\;
                     \longrightarrow\; R_{\underline{\mathbf{I}\!\mathbf{I}}}\,.
 $$

\smallskip

\begin{sdefinition} {\bf [noncommutative master ring for $\Fl(d_1, \cdots, d_r; n)$]}\; {\rm
 $\breve{R}_{\underline{\mathbf{I}\!\mathbf{I}}}$
 is called the {\it noncommutative master ring} for the flag variety $\Fl(d_1, \cdots, d_r; n)$.
}\end{sdefinition}

\medskip

\noindent
$\breve{R}_{\underline{\mathbf{I}\!\mathbf{I}}}$ plays the same role
 in the notion and construction of soft noncommutative flag schemes associated to $\Fl(d_1, \cdots, d_r; n)$
 as the group-algebra over ${\Bbb C}$ of a finitely-generated non-Abelian free group in the toric case,
cf.\ [L-Y2: Example 1.7] (D(15.1), NCS(1)).

\medskip

\begin{sdefinition} {\bf [soft noncommutative flag scheme]}\; {\rm
 A {\it $2^\underlineboldII$-system of ${\Bbb C}$-subalgebras of
             $\breve{R}_{\underline{\mathbf{I}\!\mathbf{I}}}$}
  is a collection $\{\breve{R}_{\underline{\mathbf{I}}}\}
             _{\underline{\mathbf{I}}\subset \underline{\mathbf{I}\!\mathbf{I}}}$
	 of ${\Bbb C}$-subalgebras of $\breve{R}_{\underline{\mathbf{I}\!\mathbf{I}}}$
   specified for each admissible chain
     $\underline{\mathbf{I}}\subset \underline{\mathbf{I}\!\mathbf{I}}$
   such that
     if $\underline{\mathbf{I}} \subset \underline{\mathbf{J}}$,
	 then $\breve{R}_{\underline{\mathbf{I}}}\subset \breve{R}_{\underline{\mathbf{J}}}$
	           in $\breve{R}_{\underline{\mathbf{I}\!\mathbf{I}}}$.
 A such system is called a {\it soft noncommutative flag scheme}
  if, in addition,
    the built-in ${\Bbb C}$-algebra epimorphism via commutatization
      $\pi_0: \breve{R}_{\underline{\mathbf{I}\!\mathbf{I}}}
	               \rightarrow R_{\underline{\mathbf{I}\!\mathbf{I}}}$
	  restricts a ${\Bbb C}$-algebra epimorphism
	  $\pi_0: \breve{R}_{\underline{\mathbf{I}}}\rightarrow R_{\underline{\mathbf{I}}}$
      for all $\underline{\mathbf{I}}\subset \underline{\mathbf{I}\!\mathbf{I}}$.
 We shall denote a soft noncommutative flag scheme associated to $\Fl(d_1, \cdots, d_r; n)$ by
  $$
    \breve{X}_{\underline{\mathbf{I}\!\mathbf{I}}}\;
	 :=\; ({\cal U}_0,  \breve{\cal O}_{{\cal U}_0}),
  $$
  where $\breve{\cal O}_{{\cal U}_0}$ is its {\it structure sheaf} adapted to the distinguished atlas,
   defined by the assignment
    $U_{\underline{\mathbf{I}}}\mapsto \breve{R}_{\underline{\mathbf{I}}}$,
	 $\underline{\mathbf{I}}\subset \underline{\mathbf{I}\!\mathbf{I}}$.
 In this case, 	for $\underline{\mathbf{I}}\subset \underline{\mathbf{J}}$,
   we shall denote the inclusion map
      $\breve{R}_{\underline{\mathbf{I}}}
	        \hookrightarrow \breve{R}_{\underline{\mathbf{J}}}$
	  of ${\Bbb C}$-algebras
	  by $\iota^\sharp_{\underline{\mathbf{J}}\underline{\mathbf{I}}}$,
	  where
	    $\iota_{\underline{\mathbf{J}}\underline{\mathbf{I}}}:
	       U_{\underline{\mathbf{J}}}\hookrightarrow U_{\underline{\mathbf{I}}}$
		 is the inclusion of charts in ${\cal U}_0$.
}\end{sdefinition}

\medskip

\begin{sdefinition} {\bf [softening of noncommutative flag scheme]}\; {\rm
 Let
   $\breve{X}_{\underline{\mathbf{I}\!\mathbf{I}}}
       =({\cal U}_0, \breve{\cal O}_{{\cal U}_0})$    and
   $\breve{X}^\prime_{\underline{\mathbf{I}\!\mathbf{I}}}
      = ({\cal U}_0, \breve{\cal O}^{\,\prime}_{{\cal U}_0})$
  be two soft noncommutative flag schemes associated to $\Fl(d_1, \cdots, d_r; n)$.
 We say that $\breve{X}^\prime_{\underline{\mathbf{I}\!\mathbf{I}}}$
  is a {\it softening} of $\breve{X}_{\underline{\mathbf{I}\!\mathbf{I}}}$
     or interchangeably that
	  $\breve{X}^\prime_{\underline{\mathbf{I}\!\mathbf{I}}}$
               {\it softens} $\breve{X}_{\underline{\mathbf{I}\!\mathbf{I}}}$
  if ${\cal O}_{{\cal U}_0}(U_{\underline{\mathbf{I}}})
         \subset {\cal O}^{\,\prime}_{{\cal U}_0}(U_{\underline{\mathbf{I}}})$
	 for all $\underline{\mathbf{I}} \subset \underline{\mathbf{I}\!\mathbf{I}}$.
 This defines a morphism
    $\breve{X}^\prime_{\underline{\mathbf{I}\!\mathbf{I}}}
	   \rightarrow \breve{X}_{\underline{\mathbf{I}\!\mathbf{I}}}$	
	of soft noncommutative flag schemes, named a {\it softening morphism}.
}\end{sdefinition}

\medskip

\begin{slemma} {\bf [common softening]}\;
 Let
   $\breve{X}_\underlineboldII$ and $\breve{X}^\prime_\underlineboldII$
   be two soft noncommutative flag schemes associated to $\Fl(d_1, \cdots, d_r; n)$.
 Then there exists another soft noncommutative flag scheme $\breve{X}^{\prime\prime}_\underlineboldII$
    associated to $\Fl(d_1, \cdots, d_r; n)$
  that softens both $\breve{X}_\underlineboldII$ and $\breve{X}^{\prime\prime}_\underlineboldII$.
\end{slemma}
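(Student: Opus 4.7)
The plan is to construct the common softening pointwise on the distinguished atlas by taking, for each admissible chain $\underline{\mathbf{I}}\subset\underline{\mathbf{I}\!\mathbf{I}}$, the $\Bbb C$-subalgebra of $\breve{R}_{\underline{\mathbf{I}\!\mathbf{I}}}$ generated by the union of the two given subalgebras. Concretely, set
$$
 \breve{R}^{\prime\prime}_{\underline{\mathbf{I}}}\;:=\;
  \mbox{the $\Bbb C$-subalgebra of $\breve{R}_{\underline{\mathbf{I}\!\mathbf{I}}}$
          generated by $\breve{\cal O}_{{\cal U}_0}(U_{\underline{\mathbf{I}}})
          \cup \breve{\cal O}^{\,\prime}_{{\cal U}_0}(U_{\underline{\mathbf{I}}})$,}
$$
and define $\breve{X}^{\prime\prime}_{\underline{\mathbf{I}\!\mathbf{I}}}$ to be the pair $({\cal U}_0, \breve{\cal O}^{\,\prime\prime}_{{\cal U}_0})$ with the structure sheaf given by $U_{\underline{\mathbf{I}}}\mapsto \breve{R}^{\prime\prime}_{\underline{\mathbf{I}}}$.

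I would then verify the three conditions required. First, the subordination condition: if $\underline{\mathbf{I}}\subset\underline{\mathbf{J}}$, then because both $\breve{X}_{\underline{\mathbf{I}\!\mathbf{I}}}$ and $\breve{X}^{\prime}_{\underline{\mathbf{I}\!\mathbf{I}}}$ are themselves soft noncommutative flag schemes, we have $\breve{\cal O}_{{\cal U}_0}(U_{\underline{\mathbf{I}}})\subset \breve{\cal O}_{{\cal U}_0}(U_{\underline{\mathbf{J}}})$ and $\breve{\cal O}^{\,\prime}_{{\cal U}_0}(U_{\underline{\mathbf{I}}})\subset \breve{\cal O}^{\,\prime}_{{\cal U}_0}(U_{\underline{\mathbf{J}}})$; hence the generating sets are nested, so $\breve{R}^{\prime\prime}_{\underline{\mathbf{I}}}\subset \breve{R}^{\prime\prime}_{\underline{\mathbf{J}}}$. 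Second, the softening conditions $\breve{\cal O}_{{\cal U}_0}(U_{\underline{\mathbf{I}}})\subset \breve{R}^{\prime\prime}_{\underline{\mathbf{I}}}$ and $\breve{\cal O}^{\,\prime}_{{\cal U}_0}(U_{\underline{\mathbf{I}}})\subset \breve{R}^{\prime\prime}_{\underline{\mathbf{I}}}$ are tautological from the construction, yielding the two softening morphisms $\breve{X}^{\prime\prime}_{\underline{\mathbf{I}\!\mathbf{I}}}\rightarrow\breve{X}_{\underline{\mathbf{I}\!\mathbf{I}}}$ and $\breve{X}^{\prime\prime}_{\underline{\mathbf{I}\!\mathbf{I}}}\rightarrow\breve{X}^{\prime}_{\underline{\mathbf{I}\!\mathbf{I}}}$.

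The only condition that requires a brief argument is the commutatization property: the restriction $\pi_0\colon \breve{R}^{\prime\prime}_{\underline{\mathbf{I}}}\to R_{\underline{\mathbf{I}\!\mathbf{I}}}$ must be a $\Bbb C$-algebra epimorphism onto $R_{\underline{\mathbf{I}}}$. For surjectivity, the image contains $\pi_0(\breve{\cal O}_{{\cal U}_0}(U_{\underline{\mathbf{I}}}))=R_{\underline{\mathbf{I}}}$ by hypothesis on $\breve{X}_{\underline{\mathbf{I}\!\mathbf{I}}}$. For the reverse inclusion, every element of $\breve{R}^{\prime\prime}_{\underline{\mathbf{I}}}$ is a (noncommutative) polynomial in elements of $\breve{\cal O}_{{\cal U}_0}(U_{\underline{\mathbf{I}}})\cup \breve{\cal O}^{\,\prime}_{{\cal U}_0}(U_{\underline{\mathbf{I}}})$; its $\pi_0$-image is the corresponding polynomial in elements of $R_{\underline{\mathbf{I}}}$, and since $R_{\underline{\mathbf{I}}}\subset R_{\underline{\mathbf{I}\!\mathbf{I}}}$ is itself a $\Bbb C$-subalgebra (by the contravariant description of $\Fl(d_1,\ldots,d_r;n)$ in Sec.~1), this image lies in $R_{\underline{\mathbf{I}}}$.

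The argument has no real obstacle, since all the work was absorbed into the prior description of $\Fl(d_1,\ldots,d_r;n)$ as a $2^{\underlineboldII}$-system of $\Bbb C$-subalgebras inside the master ring: once one knows that $R_{\underline{\mathbf{I}}}$ is a subalgebra of $R_{\underline{\mathbf{I}\!\mathbf{I}}}$, the category-theoretic fact that subalgebras of a fixed ambient algebra admit binary joins (subalgebra generated by the union) and that ring epimorphisms preserve generation does all the remaining work. If anything requires care, it is only bookkeeping: verifying that the construction is manifestly functorial in $\underline{\mathbf{I}}$ with respect to the partial order $\subset$, so that $\breve{\cal O}^{\,\prime\prime}_{{\cal U}_0}$ is a well-defined structure sheaf adapted to ${\cal U}_0$ and the two projections to $\breve{X}_{\underline{\mathbf{I}\!\mathbf{I}}}$ and $\breve{X}^{\prime}_{\underline{\mathbf{I}\!\mathbf{I}}}$ are genuine softening morphisms in the sense of the preceding definition.
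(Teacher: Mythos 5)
Your construction is exactly the one the paper gives: take $\breve{\cal O}^{\,\prime\prime}_{{\cal U}_0}(U_\underlineboldI)$ to be the $\Bbb C$-subalgebra of $\breve{R}_\underlineboldII$ generated by the union of the two given subalgebras. The paper states this in one line and leaves the verifications implicit; you have simply spelled out the routine checks (nestedness, tautological softening, surjectivity of $\pi_0$ onto $R_\underlineboldI$), all of which are correct.
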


\begin{proof}
 For example, take ${\cal O}^{\,\prime\prime}_{{\cal U}_0}(U_\underlineboldI)$
   to be the ${\Bbb C}$-subalgebra of $\breve{R}_\underlineboldII$
   generated by
   ${\cal O}_{{\cal U}_0}(U_\underlineboldI)
        \cup {\cal O}^{\,\prime}_{{\cal U}_0}(U_\underlineboldI)$
   for $\underlineboldI \subset \underlineboldII$.
\end{proof}

\bigskip

\begin{flushleft}
{\bf Construction of soft noncommutative flag schemes}
\end{flushleft}
In the toric case, there are monoidal structures that one would like to keep track,
  making the construction of a soft noncommutative toric scheme slightly involved.
Here, there is no similar built-in structure from the flag variety $\Fl(d_1, \cdots, d_r; n)$.
Thus the task of the construction becomes indeed light.

Recall the $2^\underlineboldII$-system $X_\underlineboldII =({\cal U}_0, {\cal O}_{{\cal U}_0})$
  of rings that represents $\Fl(d_1, \cdots, d_r; n)$ contravariantly.
A soft noncommutative flag scheme
  $\breve{X}_\underlineboldII =({\cal U}_0,  \breve{\cal O}_{{\cal U}_0})$
  that is obtained by soft-gluings of
    $|\underlineboldII|$-many copies of noncommutative affine spaces over $\Bbb C$,
	        cf.\ [L-Y2: Definition 2.1.1] (D(15.1), NCS(1)),
  can be constructed as follows:
 \begin{itemize}
  \item[$\LARGEdot$]
  For $\mathbf{I}\in \underlineboldII$ an admissible sequence,
   let $G_\mathbf{I}\subset R_{\mathbf{I}_0}[S_{\mathbf{I}_0;\mathbf{I}}^{\;-1}]$
    be the set  of the entries of 	the $r\times n$ matrix
	$C_{\mathbf{I_0}; \mathbf{I}} M_{\mathbf{I}_0}(\boldz)$
	that are not $0$ or $1$.
   Recall that $R_\mathbf{I}\subset R_\underlineboldII$ is the polynomial ring over ${\Bbb C}$
    generated by $G_\mathbf{I}$.
  Let $\breve{G}_\mathbf{I}$ be a lifting of $G_\mathbf{I}$ in $\breve{R}_\underlineboldII$
    via $\pi_0$; i.e., $\breve{G}_\mathbf{I}\subset \pi_0^{-1}(G_\mathbf{I})$ such that
	the restriction $\pi_0: \breve{G}_\mathbf{I}\rightarrow G_\mathbf{I}$ is a set-isomorphism.
  Then, set
   $$
    \begin{array}{rcl}
	 \breve{\cal O}_{{\cal U}_0}(U_\mathbf{I})\;
	  =\; \breve{R}_\mathbf{I}&  = &
	   \mbox{the ${\Bbb C}$-subalgebra of
	                   $\breve{R}_{\underline{\mathbf{I}\!\mathbf{I}}}$
                    generated by $\breve{G}_\mathbf{I}$.}
    \end{array}					
   $$
   
  \item[$\LARGEdot$]
  For $\underline{\mathbf{I}}=\{\mathbf{I}_1, \cdots, \mathbf{I}_l\}\subset \underlineboldII$
   an admissible chain, set
  $$
   \breve{\cal O}_{{\cal U}_0}(U_\underlineboldI)\;
    =\; \breve{R}_{\underline{\mathbf{I}}}\;
    =\; \mbox{the ${\Bbb C}$-subalgebra of $\breve{R}_{\underline{\mathbf{I}\!\mathbf{I}}}$
	      generated by $\breve{R}_{\mathbf{I}_1} \cup \cdots \cup\breve{R}_{\mathbf{I}_l}$.}
  $$
 \end{itemize}
By construction,
 for $\underline{\mathbf{I}}\subset \underline{\mathbf{J}}$,
 one has $\breve{R}_\underlineboldI \subset \breve{R}_{\underline{\mathbf{J}}}$.
Thus, the system defines a soft noncommutative flag scheme over ${\Bbb C}$.
Furthermore,
 for $\underlineboldI\subset \underlineboldII$,
  $\pi_0:\breve{R}_\underlineboldI \rightarrow R_\underlineboldI$
   is the commutatization of $\breve{R}_\underlineboldI$    and,
 for $\underlineboldI\subset \underline{\mathbf{J}}$,
  the following diagrams commute
  $$
   \xymatrix{
    \breve{R}_{\underline{\mathbf{I}}}\;
	  \ar@{^{(}->}[rr]^-{\iota^\sharp_{\underline{\mathbf{J}}\underline{\mathbf{I}}}}
	  \ar@{->>}[d]_-{\pi_0}
	   && \breve{R}_{\underline{\mathbf{J}}} \ar@{->>}[d]^-{\pi_0}	
	   \\	
    R_{\underline{\mathbf{I}}}\;
	  \ar@{^{(}->}[rr]^-{\iota^\sharp_{\underline{\mathbf{J}}\underline{\mathbf{I}}}}
	   && R_{\underline{\mathbf{J}}} 	  &\hspace{-4em}.
   }
  $$
Thus, $\Fl(d_1, \cdots, d_r; n)\hookrightarrow \breve{X}_{\underlineboldII}$
 as a maximal commutative subscheme.

\bigskip

\begin{flushleft}
{\bf Closed subschemes of $\breve{X}_\underlineboldII$}
\end{flushleft}
For a closed subscheme $Z\subset X$, one can realize $Z$ as a $2^\underlineboldII$-system
 $$
   Z_\underlineboldII\; :=\; ({\cal U}_0^Z, {\cal O}_{{\cal U}_0^Z} ),
 $$
 where
  ${\cal U}_0^Z:=\{Z\cap U_\underlineboldI \}_{\underlineboldI\subset \underlineboldII}$
     is the {\it induced distinguished atlas} on $Z$   and
  ${\cal O}_{{\cal U}_0^Z}$  is the {\it structure sheaf} of $Z_\underlineboldII$,
      defined by the specification
      ${\cal O}_{{\cal U}_0^Z}(U_{\underlineboldI})=R_\underlineboldI/I_\underlineboldI$,
        where $I_\underlineboldI$ is the ideal of $R_\underlineboldI$ associated to the closed subscheme
       $Z\cap U_\underlineboldI $ of $U_\underlineboldI$, for $\underlineboldI \subset \underlineboldII$.
One may write also 	
  ${\cal O}_{{\cal U}_0^Z}= {\cal O}_{{\cal U}_0}/{\cal I}_Z$,
  where ${\cal I}_Z$ is the ideal sheaf of $Z\subset X$.

Let $\breve{I}_\underlineboldI\subset \breve{R}_\underlineboldI$
   be a two-sided ideal, for $\underlineboldI \subset \underlineboldII$,
   such that
    $\pi_0(\breve{I}_\underlineboldI) = I_\underlineboldI$ for all $\underlineboldI$   and
   that $\iota_{\underline{\mathbf{J}}\underlineboldI}^\sharp(\breve{I}_\underlineboldI)
                            \subset \breve{I}_{\underline{\mathbf{J}}}$
         for $\underlineboldI \subset \underline{\mathbf{J}}$.
Then,
 the ${\Bbb C}$-algebra-homomorphism
  $\iota_{\underline{\mathbf{J}}\underlineboldI}^\sharp :
     \breve{R}_\underlineboldI \rightarrow \breve{R}_{\underline{\mathbf{J}}}$
 descend to quotients (with slight abuse of notation)
  $$
    \iota_{\underline{\mathbf{J}}\underlineboldI}^\sharp\; :\;
     \breve{R}_\underlineboldI/\breve{I}_\underlineboldI\;
	 \rightarrow\;  \breve{R}_{\underline{\mathbf{J}}}/\breve{I}_{\underline{\mathbf{J}}}
  $$
   for $\underlineboldI \subset \underline{\mathbf{J}}$.
	
\medskip

\begin{sdefinition}
{\bf [soft noncommutative closed subscheme of $\breve{X}_\underlineboldII$ associated to $Z\subset X$]}\;
{\rm
 The $2^\underlineboldII$-system of quotient ${\Bbb C}$-algebras
   $$
     \breve{Z}_\underlineboldII\; :=\;
       ({\cal U}_0^Z, \breve{\cal O}_{{\cal U}_0^Z})	
   $$
 as above is called
 a {\it soft noncommutative closed subscheme of $\breve{X}_\underlineboldII$ associated to $Z\subset X$}.
 Here,
   $\breve{\cal O}_{{\cal U}_0^Z}$  is the {\it structure sheaf}
   of $\breve{Z}_\underlineboldII$, defined by the specification
   $\breve{\cal O}_{{\cal U}_0^Z}(U_\underlineboldI)
      = \breve{R}_\underlineboldI / \breve{I}_\underlineboldI$
   for $\underlineboldI\subset \underlineboldII$.
}\end{sdefinition}

\medskip

\begin{slemma} {\bf [existence]}\;
 Let
   $Z$ be a closed subscheme of $\Fl(d_1, \cdots, d_r; n)$   and
   $\breve{X}_\underlineboldII$ be a soft noncommutative scheme associated to $\Fl(d_1, \cdots, d_r; n )$.
 Then there exists a soft noncommutative closed subscheme $\breve{Z}_\underlineboldII$
   of $\breve{X}_\underlineboldII$ associated to $Z$.
\end{slemma}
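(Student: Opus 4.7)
The plan is to construct the required two-sided ideals $\breve{I}_\underlineboldI \subset \breve{R}_\underlineboldI$ by choosing, once and for all, lifts of generators of the commutative ideal $I_\mathbf{I}$ on each maximal chart $\mathbf{I}\in\underlineboldII$, and then forming, for every admissible chain $\underlineboldI$, the two-sided ideal in $\breve{R}_\underlineboldI$ generated by pushing these lifts forward via the inclusions $\iota^\sharp_{\underlineboldI\,\mathbf{I}_j}$ with $\mathbf{I}_j \in \underlineboldI$. Because only existence (not canonicity or functoriality) is demanded, arbitrary independent choices of lifts at different maximal charts cause no difficulty.

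Concretely, for each admissible sequence $\mathbf{I}\in\underlineboldII$, the coordinate ring $R_\mathbf{I}$ is a finitely generated polynomial $\Bbb C$-algebra, hence Noetherian, so $I_\mathbf{I}$ admits a finite generating set $\{f_{\mathbf{I},k}\}_k$. The surjectivity of $\pi_0:\breve{R}_\mathbf{I}\rightarrow R_\mathbf{I}$, which is part of the definition of the soft noncommutative flag scheme $\breve{X}_\underlineboldII$, lets one pick arbitrary lifts $\breve{f}_{\mathbf{I},k}\in\breve{R}_\mathbf{I}$. Then for each admissible chain $\underlineboldI=\{\mathbf{I}_1,\ldots,\mathbf{I}_l\}$, set
$$
  \breve{I}_\underlineboldI\; :=\; \text{the two-sided ideal of } \breve{R}_\underlineboldI
     \text{ generated by } \bigcup_{j=1}^{l}\iota^\sharp_{\underlineboldI\,\mathbf{I}_j}
        \bigl(\{\breve{f}_{\mathbf{I}_j,k}\}_k\bigr).
$$

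Verification of the two required conditions is routine. For $\pi_0(\breve{I}_\underlineboldI)=I_\underlineboldI$, use the elementary fact that the image under a surjective ring homomorphism of the two-sided ideal generated by a set $S$ equals the (two-sided, hence commutative) ideal of the image ring generated by $\pi_0(S)$; combined with the fact that $R_\underlineboldI$ is a localization of each $R_{\mathbf{I}_j}$ (via the explicit description $R_\underlineboldI \simeq R_{\mathbf{I}_j}[S_{\mathbf{I}_j;\underlineboldI-\{\mathbf{I}_j\}}^{\,-1}]$ recorded in Section~1), one concludes that $I_\underlineboldI$ is the localized ideal $I_{\mathbf{I}_j}\cdot R_\underlineboldI$ and is therefore generated by $\iota^\sharp_{\underlineboldI\,\mathbf{I}_j}(\{f_{\mathbf{I}_j,k}\}_k)$. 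For the compatibility $\iota^\sharp_{\underline{\mathbf{J}}\,\underlineboldI}(\breve{I}_\underlineboldI)\subset\breve{I}_{\underline{\mathbf{J}}}$, observe that all of the maps $\iota^\sharp_{\tinybullet\,\tinybullet}$ are plain inclusions inside the master ring $\breve{R}_\underlineboldII$, so $\iota^\sharp_{\underline{\mathbf{J}}\,\underlineboldI}\circ\iota^\sharp_{\underlineboldI\,\mathbf{I}_j}=\iota^\sharp_{\underline{\mathbf{J}}\,\mathbf{I}_j}$; consequently $\iota^\sharp_{\underline{\mathbf{J}}\,\underlineboldI}$ carries each generator of $\breve{I}_\underlineboldI$ to a generator of $\breve{I}_{\underline{\mathbf{J}}}$, using $\mathbf{I}_j\in\underlineboldI\subset\underline{\mathbf{J}}$.

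The argument presents no serious obstacle; the only step deserving explicit attention is the identification of $I_\underlineboldI$ with the extension of $I_{\mathbf{I}_j}$ along the localization $R_{\mathbf{I}_j}\hookrightarrow R_\underlineboldI$, which rests on the quasi-coherence of the ideal sheaf ${\cal I}_Z$ together with the localization presentation of the transitions in the distinguished atlas ${\cal U}_0$ that was spelled out in Section~1.
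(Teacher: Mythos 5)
Your construction is the same as the paper's: lift a finite generating set of $I_\mathbf{I}$ into $\breve{R}_\mathbf{I}$ for each maximal chart $\mathbf{I}\in\underlineboldII$, and for a general admissible chain $\underlineboldI=\{\mathbf{I}_1,\ldots,\mathbf{I}_l\}$ take $\breve{I}_\underlineboldI$ to be the two-sided ideal of $\breve{R}_\underlineboldI$ generated by the images of these lifts (equivalently, by $\sum_j\breve{I}_{\mathbf{I}_j}$, which is how the paper phrases it). You go a step further than the paper in actually verifying the two required conditions --- $\pi_0(\breve{I}_\underlineboldI)=I_\underlineboldI$ via the identification of $I_\underlineboldI$ with the extension $I_{\mathbf{I}_j}\cdot R_\underlineboldI$ along the localization, and the compatibility $\iota^\sharp_{\underline{\mathbf{J}}\underlineboldI}(\breve{I}_\underlineboldI)\subset\breve{I}_{\underline{\mathbf{J}}}$ from the fact that all $\iota^\sharp$ are inclusions inside $\breve{R}_\underlineboldII$ --- whereas the paper leaves that check to the reader.
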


\begin{proof}
 Continuing the notation in the current theme.
 Note that the ideal $I_\mathbf{I}$ associated to $Z \cap U_\mathbf{I} \subset U_\mathbf{I}$
   is finitely generated, for $\mathbf{I}\in \underlineboldII$.
 Let $\breve{I}_\mathbf{I}$ be the two-sided ideal of $\breve{R}_\mathbf{I}$
   generated by a lifting of  a finite generating set of $I_\mathbf{I}$ to $\breve{R}_\mathbf{I}$.
 For general $\underlineboldI =\{\mathbf{I}_1, \cdots, \mathbf{I}_l\}\subset \underlineboldII$,
  let $\breve{I}_\underlineboldI$ be the two-sided ideal in $\breve{R}_\underlineboldI$
   generated by $\sum_{j=1}^l \breve{I}_{\mathbf{I}_j} \subset \breve{R}_\underlineboldI$.
 
\end{proof}

\bigskip

\begin{flushleft}
{\bf Example: $\Gr(2; 4)$}
\end{flushleft}
The Grassmann variety $\Gr(2; 4)$ of $2$-planes in ${\Bbb C}^4$
  is the simplest flag variety that is not a projective space itself.
The Pl\"{u}cker embedding realizes it as a nonsingular quadric hypersurface in $\CP^5$,
  whose intersection with a generic quartic hypersurface  is a Calabi-Yau $3$-fold $Z$, (e.g.\ [B-CF-K-vS]).
The distinguished atlas ${\cal U}_0$ for $\Gr(2;4)$ contains ${4 \choose 2}=6$ maximal charts.
 \begin{itemize}
  \item[$\LARGEdot$] The {\it reference chart} $U_{\mathbf{I}_0}=U_{\{1,2\}}$:\\
   $R_{\{1, 2\}}$ is the polynomial ring over ${\Bbb C}$ generated by the non-$0$-nor-$1$ entries of
   $$
     M_{\{1,2\}}(\boldz)\;=\;
	  \left[
	   \begin{array}{cccc}
	    1 & 0 & z_{13}  & z_{14} \\[.6ex]
		0 & 1 & z_{23}  & z_{24}	
	   \end{array}
	  \right].
   $$
 \end{itemize}
The {\it master ring} for $\Gr(2; 4)$  is given by the following localization of $R_{\{1, 2\}}$:
 $$
  R_{\underlineboldII}\;=\;
   {\Bbb C}[z_{13}, z_{14}, z_{23}, z_{24}]
                         [z_{13}^{-1}, z_{14}^{-1}, z_{23}^{-1}, z_{24}^{-1},
						  (z_{13}z_{24}-z_{14}z_{23})^{-1}] .
 $$
The function ring of other maximal charts in ${\cal U}_0$, as a ${\Bbb C}$-subalgebra of $R_\underlineboldII$,
 are given below.
 \begin{itemize}
  \item[$\LARGEdot$] $U_{\{1, 3\}}$:\hspace{2em}
   $R_{\{1, 3\}}$ is the polynomial ring over ${\Bbb C}$ generated by the non-$0$-nor-$1$ entries of
   $$
     C_{\{1, 2\}; \{1, 3\}}M_{\{1,2\}}(\boldz)\;=\;
	  \left[
	   \begin{array}{cccc}
	    1 & - z_{23}^{-1} z_{13}  & 0   & z_{14}- z_{23}^{-1}z_{13} z_{24} \\[.6ex]
		0 &   z_{23}^{-1}    &  1    &  z_{23}^{-1}z_{24}	
	   \end{array}
	  \right].
   $$
  
  \item[$\LARGEdot$] $U_{\{1, 4\}}$:\hspace{2em}
   $R_{\{1, 4\}}$ is the polynomial ring over ${\Bbb C}$ generated by the non-$0$-nor-$1$ entries of
   $$
     C_{\{1, 2\}; \{1, 4\}}M_{\{1,2\}}(\boldz)\;=\;
	  \left[
	   \begin{array}{cccc}
	    1 & - z_{24}^{-1} z_{14}  & z_{13}- z_{24}^{-1}z_{14} z_{23} & 0 \\[.6ex]
		0 &   z_{24}^{-1}    &  z_{24}^{-1}z_{23}	 & 1
	   \end{array}
	  \right].
   $$
  
  \item[$\LARGEdot$] $U_{\{2, 3\}}$:\hspace{2em}
   $R_{\{2, 3\}}$ is the polynomial ring over ${\Bbb C}$ generated by the non-$0$-nor-$1$ entries of
   $$
     C_{\{1, 2\}; \{2, 3\}}M_{\{1,2\}}(\boldz)\;=\;
	  \left[
	   \begin{array}{cccc}
	     - z_{13}^{-1} z_{23}  & 1  & 0   & z_{24}- z_{13}^{-1}z_{14} z_{23} \\[.6ex]
		   z_{13}^{-1}                      & 0 &  1   &  z_{13}^{-1}z_{14}	
	   \end{array}
	  \right].
   $$

  \item[$\LARGEdot$] $U_{\{2, 4\}}$:\hspace{2em}
   $R_{\{2, 4\}}$ is the polynomial ring over ${\Bbb C}$ generated by the non-$0$-nor-$1$ entries of
   $$
     C_{\{1, 2\}; \{1, 3\}}M_{\{1,2\}}(\boldz)\;=\;
	  \left[
	   \begin{array}{cccc}
	    - z_{14}^{-1} z_{24}  & 1   & z_{23}- z_{14}^{-1}z_{13} z_{24} & 0 \\[.6ex]
		   z_{14}^{-1}    &  0    &  z_{14}^{-1}z_{13}  & 1	
	   \end{array}
	  \right].
   $$
   
  \item[$\LARGEdot$] $U_{\{3,4\}}$:\hspace{2em}
   $R_{\{3,4\}}$ is the polynomial ring over ${\Bbb C}$ generated by the non-$0$-nor-$1$ entries of
   $$
     C_{\{1, 2\}; \{3,4\}}M_{\{1,2\}}(\boldz)\;=\;
	  \left[
	   \begin{array}{cccc}
	    (z_{13}z_{24}-z_{14}z_{23})^{-1}z_{24}
              &  -(z_{13}z_{24}-z_{14}z_{23})^{-1}z_{14}   & 1    & 0		\\[.6ex]
	     -(z_{13}z_{24}-z_{14}z_{23})^{-1}z_{23}	
		      &  (z_{13}z_{24}-z_{14}z_{23})^{-1}z_{13}    & 0    & 1
	   \end{array}
	  \right].
   $$
 \end{itemize}
 
 Without further input from mathematics or D-brane probe in string theory,
  a soft noncommutative flag scheme can be obtained very flexibly or randomly.
 For example, here for $\Gr(2; 4)$,
  a formal replacement
  $$
    z_{ij}\; \longmapsto\; \breve{z}_{ij}
  $$
  while keeping all the expressions above intact with the knowing that $\breve{z}_{ij}$'s are noncommutative
 gives rise to a soft noncommutative Grassmann scheme $\breve{X}_\underlineboldII$
 associated to $\Gr(2;4)$ by the recipe in the previous theme.

\bigskip

\begin{flushleft}
{\bf Soft noncommutative Calabi-Yau schemes in $\breve{X}_\underlineboldII$ associated to $\Gr(2;4)$?}
\end{flushleft}
Let
\begin{itemize}
 \item[$\LARGEdot$]
  $\mathbf{P}^5$ be the projective $5$- space over ${\Bbb C}$
       with the homogeneous coordinates $[y_0: y_1: \cdots : y_5]$;
	
 \item[$\LARGEdot$]	
  $V_i \simeq \mathbf{A}^5$ (the affine $5$-space over ${\Bbb C}$)
    be  the complement of the hyperplane $\{y_i=0\}$ in $\mathbf{P}^5$, for $i=0, \cdots, 5$,
    that together produce an atlas on $\mathbf{P}^5$;
	
 \item[$\LARGEdot$]
  $\varPsi: \Gr(2; 4)\hookrightarrow \mathbf{P}^5$
   be the Pl\"{u}cker embedding of $\Gr(2; 4)$ in $\mathbf{P}^5$ given by the correspondence
    $$
	   M\; \longmapsto\;
	      [|M^{\{1,2\}}| : |M^{\{1,3\}}| : |M^{\{1, 4\}}| :
		   |M^{\{2, 3\}}| : |M^{\{2,4\}}| : |M^{\{3,4\}}|],
	$$
 	where
	  $M$ is a $2\times 4$ matrix with coefficients in ${\Bbb C}$ and
	  $|M^\mathbf{I}|$, $\mathbf{I}\in\underlineboldII$,
	              is the determinant of the $2\times 2$ submatrix of $M$ specified by $\mathbf{I}$.
\end{itemize}				
Then, in terms of $X_{\underlineboldII}$,
$\varPsi: \Gr(2; 4)\rightarrow \mathbf{P}^5$ is given by a gluing of  embeddings of affine schemes
 $$
  \begin{array}{ccc}
    \xymatrix{U_{\{1,2\}}\ar@{^{(}->}[rr]^-{\varPhi_{\{1, 2\}}}  && V_0}\!,
    &  \xymatrix{U_{\{1,3\}}\ar@{^{(}->}[rr]^-{\varPhi_{\{1, 3\}}}  && V_1}\!,
    &	 \xymatrix{U_{\{1,4\}}\ar@{^{(}->}[rr]^-{\varPhi_{\{1, 4\}}}  && V_2}\!,
		 \\[1.2ex]
    \xymatrix{U_{\{2, 3\}}\ar@{^{(}->}[rr]^-{\varPhi_{\{2, 3\}}}  && V_3}\!,
	& \xymatrix{U_{\{2, 4\}}\ar@{^{(}->}[rr]^-{\varPhi_{\{2, 4\}}}  && V_4}\!,
	& \xymatrix{U_{\{3, 4\}}\ar@{^{(}->}[rr]^-{\varPhi_{\{3, 4\}}}  && V_5}\!,	
  \end{array}
 $$
 and
the underlying
   $\varPsi^\sharp: {\cal O}_{\mathbf{P}^5}\rightarrow \varPsi_\ast {\cal  O}_{Gr(2;4)}$
 is a gluing of ${\Bbb C}$-algebra-homomorphisms
 $$
  \begin{array}{ccc}
   \xymatrix{{\cal O}_{\mathbf{P}^5}(V_0)\ar[rr]^-{\varPhi^\sharp_{\{1, 2\}}}
                           && R_{\{1, 2\}}}\!,
   & \xymatrix{{\cal O}_{\mathbf{P}^5}(V_1)\ar[rr]^-{\varPhi^\sharp_{\{1, 3\}}}
                           && R_{\{1, 3\}}}\!,
   & \xymatrix{{\cal O}_{\mathbf{P}^5}(V_2)\ar[rr]^-{\varPhi^\sharp_{\{1, 4\}}}
                           && R_{\{1, 4\}}}\!,
          \\[1.2ex]
   \xymatrix{{\cal O}_{\mathbf{P}^5}(V_3)\ar[rr]^-{\varPhi^\sharp_{\{2, 3\}}}
                           && R_{\{2, 3\}}}\!,
   & \xymatrix{{\cal O}_{\mathbf{P}^5}(V_4)\ar[rr]^-{\varPhi^\sharp_{\{2, 4\}}}
                           && R_{\{2, 4\}}}\!,
   & \xymatrix{{\cal O}_{\mathbf{P}^5}(V_5)\ar[rr]^-{\varPhi^\sharp_{\{3, 4\}}}
                           && R_{\{3, 4\}}}\!.
  \end{array}
 $$
With slight abuse of notations,
 these $\varPsi_\mathbf{I}^\sharp$, $\mathbf{I}\in \underlineboldII$,
  can be described by ${\Bbb C}$-vector-space-homomorphisms
  $$
     \varPsi_\mathbf{I}^\sharp\; :\;
	  \varGamma(\mathbf{P}^5,  {\cal O}_{\mathbf{P}^5}(n))\;
	   \longrightarrow\; R_\mathbf{I},
  $$
  $n=1, 2, \cdots\,$.
Explicitly, for a homogeneous polynomial of degree $n$
 $$
    f\;  :=\;  f(y_0, y_1, y_2, y_3, y_4, y_5)\;
	   \in\;    \varGamma(\mathbf{P}^5,  {\cal O}_{\mathbf{P}^5}(n)),
 $$
one has
 $${\scriptsize\hspace{1ex}
  \begin{array}{rcl}
   \varPhi_{\{1, 2\}}^\sharp(f) & =  &
    f(|M_{\{1, 2\}}(\boldz)^{\{1, 2\}}|, |M_{\{1, 2\}}(\boldz)^{\{1, 3\}}|,
         |M_{\{1, 2\}}(\boldz)^{\{1, 4\}}|, |M_{\{1, 2\}}(\boldz)^{\{2, 3\}}|,    	
		 |M_{\{1, 2\}}(\boldz)^{\{2, 4\}}|, |M_{\{1, 2\}}(\boldz)^{\{3, 4\}}|  )  \\[1.2ex]
	 & = &
	 f(1,\; z_{23},\; z_{24},\; -z_{13},\; -z_{14},\; z_{13}z_{24}- z_{14}z_{23}),
	   \\[.6ex]
  \end{array}
  }
 $$
 $${\scriptsize
  \begin{array}{rcl}
   \varPhi_{\{1, 3\}}^\sharp(f) & =  &
    f(|C_{\{1, 2\}; \{1, 3\}}M_{\{1, 2\}}(\boldz)^{\{1, 2\}}|,\;
	     |C_{\{1, 2\}; \{1, 3\}}M_{\{1, 2\}}(\boldz)^{\{1, 3\}}|,\;
         |C_{\{1, 2\}; \{1, 3\}}M_{\{1, 2\}}(\boldz)^{\{1, 4\}}|,
		 \\[.6ex]
		 && \hspace{6em}
		 |C_{\{1, 2\}; \{1, 3\}}M_{\{1, 2\}}(\boldz)^{\{2, 3\}}|,\;    	
		 |C_{\{1, 2\}; \{1, 3\}}M_{\{1, 2\}}(\boldz)^{\{2, 4\}}|,\;
		 |C_{\{1, 2\}; \{1, 3\}}M_{\{1, 2\}}(\boldz)^{\{3, 4\}}|  )
		 \\[1.2ex]
	 & = &
	 f(z_{23}^{-1},\; 1,\; z_{23}^{-1}z_{24},\; -z_{23}^{-1}z_{13},\;
	      -z_{23}^{-1}z_{14},\;  -z_{14}+ z_{23}^{-1}z_{13}z_{24}),
		\\[.6ex]
  \end{array}
  }
 $$
 $${\scriptsize
  \begin{array}{rcl}
   \varPhi_{\{1, 4\}}^\sharp(f) & =  &
    f(|C_{\{1, 2\}; \{1, 4\}}M_{\{1, 2\}}(\boldz)^{\{1, 2\}}|,\;
	     |C_{\{1, 2\}; \{1, 4\}}M_{\{1, 2\}}(\boldz)^{\{1, 3\}}|,\;
         |C_{\{1, 2\}; \{1, 4\}}M_{\{1, 2\}}(\boldz)^{\{1, 4\}}|,
		 \\[.6ex]
		 && \hspace{6em}
		 |C_{\{1, 2\}; \{1, 4\}}M_{\{1, 2\}}(\boldz)^{\{2, 3\}}|,\;    	
		 |C_{\{1, 2\}; \{1, 4\}}M_{\{1, 2\}}(\boldz)^{\{2, 4\}}|,\;
		 |C_{\{1, 2\}; \{1, 4\}}M_{\{1, 2\}}(\boldz)^{\{3, 4\}}|  )
		 \\[1.2ex]
	 & = &
	 f(z_{24}^{-1},\; z_{24}^{-1}z_{23},\; 1,\; -z_{24}^{-1}z_{13},\;
	      -z_{24}^{-1}z_{14},\;  z_{13}- z_{24}^{-1}z_{14}z_{23}),
		  \\[.6ex]
  \end{array}
  }
 $$
 $${\scriptsize
  \begin{array}{rcl}
   \varPhi_{\{2, 3\}}^\sharp(f) & =  &
    f(|C_{\{1, 2\}; \{2, 3\}}M_{\{1, 2\}}(\boldz)^{\{1, 2\}}|,\;
	     |C_{\{1, 2\}; \{2, 3\}}M_{\{1, 2\}}(\boldz)^{\{1, 3\}}|,\;
         |C_{\{1, 2\}; \{2, 3\}}M_{\{1, 2\}}(\boldz)^{\{1, 4\}}|,
		 \\[.6ex]
		 && \hspace{6em}
		 |C_{\{1, 2\}; \{2, 3\}}M_{\{1, 2\}}(\boldz)^{\{2, 3\}}|,\;    	
		 |C_{\{1, 2\}; \{2, 3\}}M_{\{1, 2\}}(\boldz)^{\{2, 4\}}|,\;
		 |C_{\{1, 2\}; \{2, 3\}}M_{\{1, 2\}}(\boldz)^{\{3, 4\}}|  )
		 \\[1.2ex]
	 & = &
	 f(-z_{13}^{-1},\; -z_{13}^{-1}z_{23},\; -z_{13}^{-1}z_{24},\; 1,\;
	       z_{13}^{-1}z_{14},\;  -z_{24}+ z_{13}^{-1}z_{14}z_{23}),
		 \\[.6ex]
  \end{array}
  }
 $$
 $${\scriptsize
  \begin{array}{rcl}
   \varPhi_{\{2, 4\}}^\sharp(f) & =  &
    f(|C_{\{1, 2\}; \{2, 4\}}M_{\{1, 2\}}(\boldz)^{\{1, 2\}}|,\;
	     |C_{\{1, 2\}; \{2, 4\}}M_{\{1, 2\}}(\boldz)^{\{1, 3\}}|,\;
         |C_{\{1, 2\}; \{2, 4\}}M_{\{1, 2\}}(\boldz)^{\{1, 4\}}|,
		 \\[.6ex]
		 && \hspace{6em}
		 |C_{\{1, 2\}; \{2, 4\}}M_{\{1, 2\}}(\boldz)^{\{2, 3\}}|,\;    	
		 |C_{\{1, 2\}; \{2, 4\}}M_{\{1, 2\}}(\boldz)^{\{2, 4\}}|,\;
		 |C_{\{1, 2\}; \{2, 4\}}M_{\{1, 2\}}(\boldz)^{\{3, 4\}}|  )
		 \\[1.2ex]
	 & = &
	 f(-z_{14}^{-1},\; -z_{14}^{-1}z_{23},\; -z_{14}^{-1}z_{24},\; z_{14}^{-1}z_{13},\;
	      1,\;   z_{23}- z_{14}^{-1}z_{13}z_{24}),
  \end{array}
  }
 $$
 $${\scriptsize
  \begin{array}{rcl}
   \varPhi_{\{3, 4\}}^\sharp(f) & =  &
    f(|C_{\{1, 2\}; \{3, 4\}}M_{\{1, 2\}}(\boldz)^{\{1, 2\}}|,\;
	     |C_{\{1, 2\}; \{3, 4\}}M_{\{1, 2\}}(\boldz)^{\{1, 3\}}|,\;
         |C_{\{1, 2\}; \{3, 4\}}M_{\{1, 2\}}(\boldz)^{\{1, 4\}}|,
		 \\[.6ex]
		 && \hspace{6em}
		 |C_{\{1, 2\}; \{3, 4\}}M_{\{1, 2\}}(\boldz)^{\{2, 3\}}|,\;    	
		 |C_{\{1, 2\}; \{3, 4\}}M_{\{1, 2\}}(\boldz)^{\{2, 4\}}|,\;
		 |C_{\{1, 2\}; \{3, 4\}}M_{\{1, 2\}}(\boldz)^{\{3, 4\}}|  )
		 \\[1.2ex]
	 & = &
	 f((z_{13}z_{24}- z_{14}z_{23})^{-1},\;
	      (z_{13}z_{24}- z_{14}z_{23})^{-1}z_{23},\;
		  (z_{13}z_{24}- z_{14}z_{23})^{-1} z_{24},
		    \\[.6ex]
			&& \hspace{6em}
		  -(z_{13}z_{24}- z_{14}z_{23})^{-1}z_{13},\;
		  -(z_{13}z_{24}- z_{14}z_{23})^{-1}z_{14},\;
		      1).
  \end{array}
  }
 $$
Each $\varPsi_\mathbf{I}^\sharp(f)$  can be further lifted to $\breve{R}_\mathbf{I}$
  under $\pi_0: \breve{R}_\mathbf{I}\rightarrow R_\mathbf{I}$,
 for $\mathbf{I}\in \underlineboldII$.
Together with the proof of Lemma~2.6,
 this gives a direct association of a subscheme $V$ of $\mathbf{P}^5$
 with a soft noncommutative closed subscheme
      $\breve{Z}_\underlineboldII$ of $\breve{X}_\underlineboldII$
	  associated to $V\cap \Gr(2;4)$.
	
In particular,
 for $Z$ a subscheme in $\Gr(2; 4)$ arising from a generic quartic hypersurface in $\mathbf{P}^5$
   described by a homogeneous polynomial
   $f\in \varGamma(\mathbf{P}^5, {\cal O}_{{\mathbf{P}}^5}(4))$,
one has a soft noncommutative scheme $\breve{Z}_\underlineboldII$
 that contains the Calabi-Yau $3$-fold $Z$ as its master commutative subscheme.

We conclude the current theme and the notes with a guiding question from [L-Y2]:

\medskip

\noindent
{\bf Question [soft noncommutative Calabi-Yau space and mirror]}\; 
 ([L-Y2: Question 3.12] (D(15.1), NCS(1)).)
 What is the correct notion/definition of {\it soft noncommutative Calabi-Yau spaces}
   in the current context?
 From pure mathematical generalization of the commutative case?
 From world-volume conformal invariance or supersymmetry of D-branes?
 What is the {\it mirror symmetry} phenomenon in this context?																	
				
\medskip

\noindent
and {\sc Figure}~2-1.
    %

\vspace{-2.4em}	
\begin{figure}[htbp]
 \bigskip
  \centering
  \includegraphics[width=0.80\textwidth]{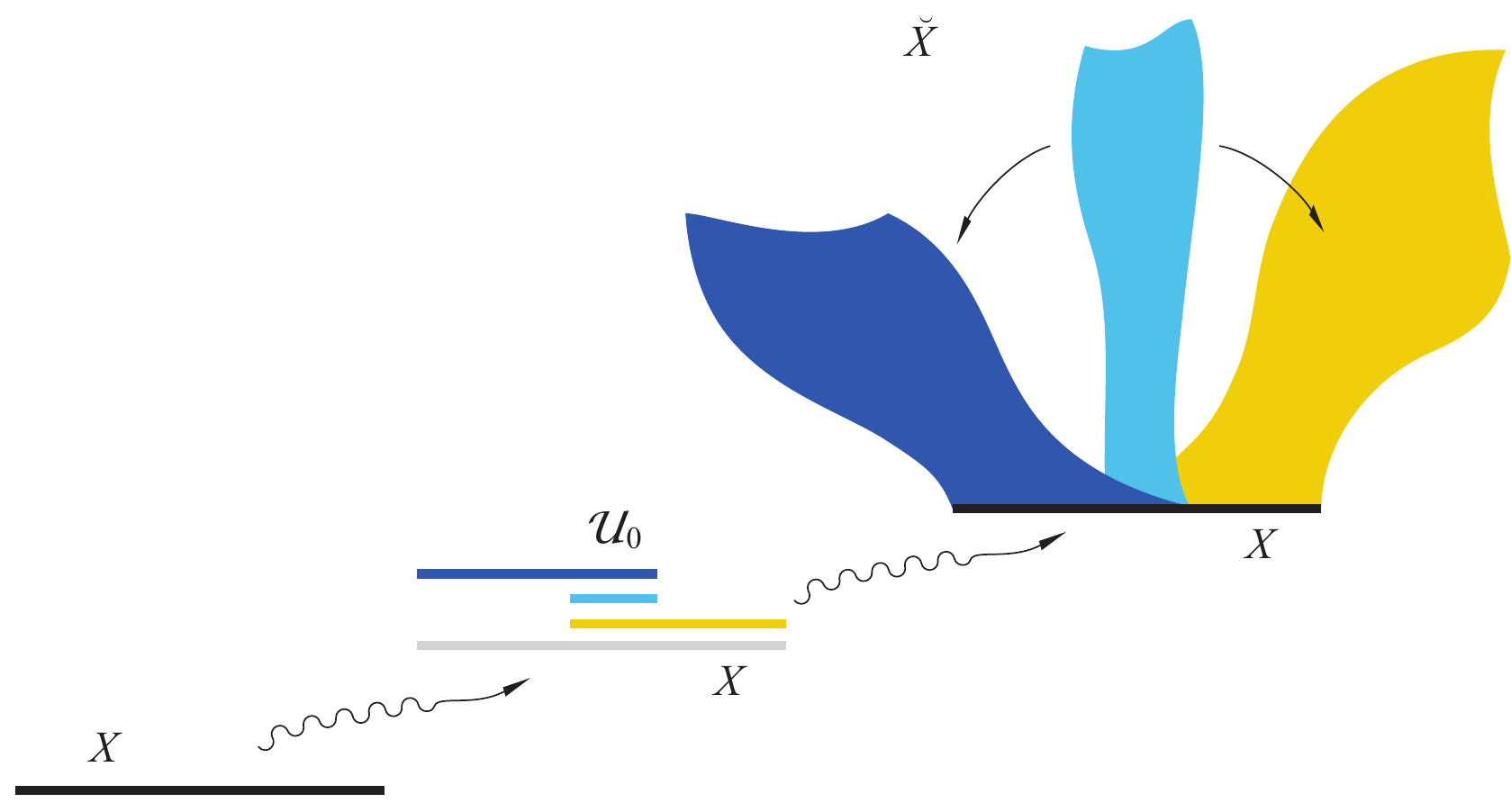}
  
  \bigskip
  
 \centerline{\parbox{13cm}{\small\baselineskip 12pt
  {\sc Figure}~2-1.
  The world-volume of stacked D-branes naturally carries an Azumaya structure, which is noncommutative,
    cf.\ [L-Y1] (D(1)), [Liu]. This allows stacked D-branes to get mapped to noncommutative target-spaces.
  On the other hand,
   a theory of localizations of general noncommutative rings is problematic,
   which leads to the notion of noncommutative schemes that directly generalizes Grothendieck's work for
   (Commutative) Algebraic Geometry impossible.
  Yet, for the purpose of serving as target spaces of dynamical D-branes, one only needs a gluing system of charts.
  This motivates the notion of `{\it soft noncommutative schemes}'.
  Beginning with a (commutative) scheme $X$, we may cover it by an atlas ${\cal U}_0$
    of simple enough, reasonably good affine charts.
  One then extends this inclusion system of charts to a gluing system system
  $\breve{X}:= ({\cal U}_0, \breve{\cal O}_{{\cal U}_0})$
   of noncommutative charts,
   in which inclusions of commutative charts extend but relax to morphisms $\longrightarrow$ of noncommutative charts
    that satisfy cocycle conditions for gluing. 
 By construction, $X$ naturally embeds in $\breve{X}$.
 $\breve{X}$ as constructed can be ``very fluffy around $X$".
 Other inputs from String Theory, particularly D-brane probes, 
  should be taken into account to make a good notion of `soft noncommutative Calabi-Yau schemes' 
  when $X$ is a Calabi-Yau space.
  }}
\end{figure}

\newpage
\baselineskip 13pt
{\footnotesize

\vspace{1em}

\noindent
chienhao.liu@gmail.com, 
chienliu@cmsa.fas.harvard.edu; \\
yau@math.harvard.edu

}

\end{document}